\documentclass[12pt]{amsart}

\usepackage{mystyle}

\begin{document}

\title{Further Time Regularity for Non-Local, Fully Non-Linear Parabolic Equations}

\author[H. A. Chang-Lara]{H\'ector A. Chang-Lara}
\address{Department of Mathematics, Columbia University, New York, NY 10027}
\email{changlara@math.columbia.edu}

\author[D. Kriventsov]{Dennis Kriventsov}
\address{Department of Mathematics, Courant Institute of Mathematical Sciences, New York University, New York, NY 10012}
\email{dennisk@cims.nyu.edu}

\begin{abstract}
We establish H\"older estimates for the time derivative of solutions of non-local parabolic equations under mild assumptions for the boundary data. As a consequence we are able to extend the Evans-Krylov estimate for rough kernels to parabolic equations.
\end{abstract}
\subjclass{35B45, 35B65, 35K55, 35R09}
\keywords{Further regularity in time, H\"older estimates, Krylov-Safonov, non-local fully non-linear equations}

\maketitle

%%%Introduction%%%

\section{Introduction}\label{sec:intro}

Parabolic non-local equations are used to describe a quantity (temperature, pressure, expected value of a game, etc.) that is driven by an average of its values. These could be given by a linear equation $u_t = L_K u$ posed over a space-time domain $\W\times(t_0,t_1]\ss\R^n\times\R$ where,
\begin{align*}
L_K u(x,t) = (2-\s)\int_{\R^n} \left[u(x+y,t)-u(x,t)-y\cdot Du(x,t) 1_{B_{1}}(y)\right]\frac{K(x,y)}{|y|^{n+\s}}dy.
\end{align*}
This is the infinitesimal generator of a purely discontinuous L\'evy process of order $\s\in(0,2)$ with a non-negative L\'evy measure 
$\m_x(dy) = (2-\s)\frac{K(x,y)}{|y|^{n+\s}}dy$. In particular, $K(x,y)=1$ corresponds to a positive multiple of the fractional laplacian, $C_{n,\s}\D^{\s/2} = -C_{n,\s}(-\D)^{\s/2}$ defined in terms of the Fourier multiplier $\widehat{(-\D)^{\s/2}} = |\xi|^{\s}$. The factor $(2-\s)$ is included in order to obtain a positive multiple of the standard laplacian as $\s\to2^-$, connecting in this way with the classical second order theory.

Fully non-linear operators can be obtained, for instance, by considering inf and sup combinations of linear operators as above,
\[
Iu(x,t) = \inf_\a \sup_\b L_{K_{\a,\b}} u(x,t).
\]
In particular, these are the constructions that appear in stochastic games with multiple players. They should be kept in mind whenever we refer to a fully non-linear operator $Iu$ in this introductory section. In Section \ref{sec:preliminaries} we recapitulate the precise definition of fully non-linear uniformly elliptic operators following the reference \cite{MR2494809}.

L. Caffarelli and L. Silvestre studied in a series of papers \cite{MR2494809, MR2781586, MR2831115} the interior regularity of the elliptic problem $Iu = f(x)$. Their approach adapted the Krylov-Safanov and Evans-Krylov theory for fully non-linear equations to the non-local setting. This allowed them to recover uniform estimates as the order $\s\to2^-$, extending the second order theory of fully non-linear equations to non-local problems. In the parabolic setting, the first author of this paper in collaboration with G. D\'avila considered in \cite{MR3148110,MR3115838,2014arXiv1408.5149L} the corresponding parabolic estimates by adapting the strategies from \cite{MR2494809, MR2781586, MR2831115}. We discuss further developments of the theory in the following paragraphs, in particular those closely related with the present work.

In this paper we address the time regularity of the solution. This is one remarkable point of departure between the local and the non-local equations. Let us recall that solution of the local heat equation $u_t = \D u$ over $\W\times(t_0,t_1] \ss \R^n\times\R$ is $C^\8$ in space and time on the interior of the domain regardless of the nature of the initial and boundary data. On the other hand, for the fractional equation $u_t = \D^{\s/2} u$ over $\W\times(t_0,t_1]$ this turns out not to be the case. While solutions will instantly become $C^\8$ in space, a simple example (see \cite{MR3148110}) shows that if the boundary data drastically changes at a given time then $u$ might be no smoother than Lipschitz continuous in time. In other words, the non-local nature of $\D^{\s/2}$ is more sensitive to changes of the data posed over the complement of $\W$.

Our main theorem says that whenever the complementary data is H\"older continuous in time, then $u_t$ is H\"older continuous in space and time with a corresponding estimate. In the following statement $\bar \a\in(0,1)$ is the exponent from the Krylov-Safonov Theorem, a positive constant depending only on the dimension and the ellipticity constants of $I$.

\begin{theorem}\label{thm:intro}
Let $I$ be uniformly elliptic of order $\s\in(0,2)$ with $I0 = 0$ and $u$ a classical solution of,
\begin{alignat*}{2}
u_t - Iu &= f(x,t) \qquad &&\text{ in } \qquad  B_1\times (-1,0],\\
u &= g \qquad &&\text{ on } \qquad  (\R^n\sm B_1)\times (-1,0].
\end{alignat*}
If for some $\gamma \in (0, \bar \a)$, $f(x,\cdot), g(x,\cdot) \in C^{0,\g/\s}(-1,0]$ uniformly in $x$, then for some constant $C>0$ depending on $\min(\gamma,(\bar \a-\gamma))$ we have the a priori estimate,
\[
 \sup_{\substack{x,y\in B_{1/2}\\t,s\in(-1/2,0]}}\frac{|u_t(y,s)-u_t(x,t)|}{(|x-y|+|t-s|^{1/\s})^\gamma} \leq C\1\sup_{x\in B_1} \|f(x,\cdot )\|_{C^{0,\g /\s}(-1,0]} + \sup_{x\in \R^n\sm B_1} \|g(x,\cdot )\|_{C^{0,\g/\s}(-1,0]}\2.
\]
\end{theorem}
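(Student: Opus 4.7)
The strategy is a classical incremental-quotients argument in time, leveraging the translation invariance of $I$ in $t$. For $h \in (0, 1/2)$, let $v^h(x,t) := u(x,t+h)-u(x,t)$. Since both $u$ and $u(\cdot,\cdot+h)$ solve the equation (with data pairs $(f,g)$ and $(f(\cdot,\cdot+h), g(\cdot,\cdot+h))$ respectively), uniform ellipticity of $I$ gives that $v^h$ satisfies the two-sided Pucci extremal inequalities
\[
\mathcal{M}^- v^h + f^h \le v^h_t \le \mathcal{M}^+ v^h + f^h \qquad \text{in } B_1 \times (-1+h, -h],
\]
with complement values $v^h = g^h$ on $(\R^n \sm B_1)\times(-1+h,-h]$, where $f^h(x,t) := f(x,t+h) - f(x,t)$ and $g^h(x,t) := g(x,t+h)-g(x,t)$ are both of size $O(h^{\g/\s})$ in $L^\8$ by hypothesis.

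The first step is to apply the parabolic non-local Krylov--Safonov H\"older estimate (established for Pucci inequalities in \cite{MR3148110,MR3115838,2014arXiv1408.5149L}) to $v^h$, obtaining a bound of the form
\[
\|v^h\|_{L^\8(Q_{1/2})} + [v^h]_{C^{\bar\a}(Q_{1/2})} \le C h^{\g/\s},
\]
where $Q_r$ denotes the parabolic cylinder of radius $r$ and the H\"older seminorm uses the parabolic distance $|x-y|+|t-s|^{1/\s}$ and includes the tail contribution from $g^h$. This already yields $u \in C^{\g/\s}$ in time with norm bounded by the data.

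To upgrade this to H\"older regularity of $u_t$, the plan is a dyadic iteration at parabolic scales $r_k = 2^{-k}$. Around any base point $(x_0,t_0) \in Q_{1/2}$, one inductively constructs constants $a_k$ approximating $u_t(x_0,t_0)$ together with an auxiliary spatial correction $\varphi_k(x)$ accounting for the non-local contribution $Iu(x_0,t_0)$, and shows
\[
\sup_{Q_{r_k}(x_0,t_0)} \big|u(x,t) - u(x_0,t_0) - a_k(t-t_0) - \varphi_k(x)\big| \le C r_k^{\s+\g}.
\]
Rescaling to the unit cylinder and applying the previous step to the time-increment of the rescaled function yields an improvement factor $\theta < 1$ at each step, made possible by $\g < \bar\a$. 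The sequence $\{a_k\}$ is Cauchy with geometric rate $r_k^\g$, so its limit is $u_t(x_0,t_0)$ with the claimed modulus; running the argument at two base points $(x_0,t_0)$ and $(y_0,s_0)$ and comparing yields the full space-time estimate.

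The main technical obstacle lies in this iteration. The naive subtraction of only an affine-in-$t$ correction fails, because after the rescaling $u \mapsto r_k^{-\s-\g}\big[u(x_0 + r_k y, t_0 + r_k^\s \tau) - u(x_0,t_0) - a_k r_k^\s\tau\big]$ the rescaled forcing becomes $r_k^{-\g}(f - a_k)$, whose boundedness requires $a_k \approx f(x_0,t_0) + Iu(x_0,t_0)$: but $Iu$ is a non-local quantity that must be tracked along the iteration, whence the auxiliary correction $\varphi_k$. Handling the tail terms of $g^h$ --- which are only assumed H\"older in $t$, not in $x$ --- uniformly along the rescaling is an additional subtlety. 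The $\min(\g, \bar\a-\g)$ dependence of the final constant then reflects the two competing sources of blow-up: $\g \to 0^+$ makes the $h^{\g/\s}$ forcing rate degenerate, while $\g \to \bar\a^-$ makes the geometric series in the iteration fail to converge.
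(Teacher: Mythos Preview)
Your first step is essentially sound (though it needs a preliminary application of Krylov--Safonov to $u$ itself, so that the interior contribution to $\|v^h\|_{L^1_\s}$ is controlled). The genuine gap is in the Campanato-type iteration. After rescaling to $\tilde u(y,\tau)=r_k^{-(\s+\g)}[u(x_0+r_k y,t_0+r_k^\s\tau)-u(x_0,t_0)-a_k r_k^\s\tau-\varphi_k]$, you wish to apply Krylov--Safonov to $\d_h\tilde u$. That estimate requires $\sup_\tau\|\d_h\tilde u(\cdot,\tau)\|_{L^1_\s}$, which picks up contributions from the annuli $|y|\sim 2^j$ for all $j\ge 1$, corresponding to the original scales $r_{k-j}$. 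Your inductive hypothesis at step $k-j$ only says $|u-a_{k-j}t-\varphi_{k-j}|\le C r_{k-j}^{\s+\g}$ on $Q_{r_{k-j}}$, so the best pointwise bound you get on $\d_h\tilde u$ there is of order $2^{j(\s+\g)}$, independent of $h$. After the $|y|^{-(n+\s)}$ weight this contributes $\sim 2^{j\g}$, which neither sums in $j$ nor carries the factor $h^{\g/\s}$ you need; the iteration therefore does not close. The spatial corrector $\varphi_k$ does not help here, since it cancels in the time increment.

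The paper's approach is organized around a different iteration: instead of subtracting affine-in-$t$ pieces and iterating in scale, it iterates on the exponent $\b$ of the difference quotient $\d_\tau u/\tau^\b$, uniformly in $\tau$. The key diminish-of-oscillation lemma (Lemma~\ref{lem:diminish_of_oscilation}) takes as inductive hypothesis a $C^{0,\e}$ bound on $\d_\tau u/\tau^\b$ over \emph{expanding} balls $B_{\m^{-i}}$ with geometric decay $\m^{\a i}$---precisely so that the $L^1_\s$ tail of the rescaled function stays bounded. The potential blow-up as $\tau\to 0$ is handled by an interpolation (Corollary~\ref{lem:appendix3}) that reduces to $\tau$ bounded away from zero, using the assumed $C^{0,\e}$ modulus. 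Each pass of Corollary~\ref{cor:improvement_dif_quot} raises $\b$ by roughly $\bar\a/(2\s)$, and after finitely many passes (Corollary~\ref{cor:bdd_rhs}) one reaches $\b$ close to $1$; the H\"older hypothesis on $f$ and $g$ is then used in a final pass to cross $\b+\a/\s+\e/\s=1$ and extract $u_t\in C^{0,\g}$. None of this structure---the tail-controlling hypothesis on expanding balls, the uniformity in $\tau$, or the small-$\tau$ trick---is present in your scheme, and it is exactly what is needed to make a non-local iteration close.
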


We also obtain an almost optimal result in the sense that if the complementary data is just bounded, then $u$ is H\"older continuous in time for every exponent less than one, see Corollary \ref{cor:bdd_rhs}. Whether, in the bounded data case, the solution is actually Lipschitz in time or not remains open.

Let us briefly compare this to known results for non-local equations. When $\s\in(0,1)$, the case of bounded data was treated by J. Serra in \cite{MR3385173} and, same as in our case, proves a H\"older estimate in time for every exponent less than one. When $\s\in(1,2)$, as in our theorem, the best known results assert that (for bounded data) $u$ is H\"older continuous in time for some exponent slightly bigger that $1/\s$ (see \cite{MR3385173,MR3115838}), which is substantially weaker than our result when $\s$ is away from one. For the case of Lipschitz continuous complementary data, an argument using the comparison principle and the Krylov-Safonov estimate gives that $u_t$ is H\"older continuous, which is far from optimal in the dependence on the complementary data. For linear equations with H\"older continuous data, T. Jin and J. Xiong showed in \cite{jinlinear} that $u_t$ is H\"older continuous.

For non-local parabolic equations in particular, differentiability in time can be a very convenient property. For example, for concave equations, analogues of the Evans-Krylov theorem and Schauder estimates are quite difficult, and have only recently been established for nonlocal operators without extra smoothness assumptions on the kernels in \cite{jinnonlinear,serra2014c},  in the elliptic case. In the final section, we give an easy application of our result to show that their theorems still hold for parabolic equations with H\"older continuous data and such kernels.

The first idea in the strategy consists on trading off the boundary data for a right-hand side by truncating the tail of the solution, this is already a standard technique for non-local equations. Our main contribution consists of showing a diminish of oscillation for the incremental quotients $\frac{\d_\t u(x,t)}{\t^\b}:=\frac{u(x,t)-u(x,t-\t)}{\t^\b}$ by assuming some H\"older continuity of the right-hand side introduced by the truncation. This involves several challenges; on one hand, the equation for $\d_\t u/\t^\b$ has a right-hand side that might degenerate as $\t$ approaches zero. On the other hand, by using the corresponding scaling for $\d_\t u/\t^\b$ we make $u$ grow. The key idea is to assume some small a priori H\"older continuity for $\d_\t u/\t^\b$ which gives a way to control the difference quotients for $\t$ arbitrarily small by the difference quotients with $\t$ bounded away from zero. This is rigorously established in the proof of Lemma \ref{lem:diminish_of_oscilation}.

We have recently applied the same argument to a related problem for second-order fully non-linear parabolic equations in \cite{2015arXiv150406294C}. The techniques are very similar and avoid some of the technical difficulties found in the non-local case. A more general result for linear second-order parabolic equations in divergence and non divergence form has been obtained in \cite{2015arXiv150200886D} by different methods.

\subsection{Applying the Main Result}\label{ss:applying}

At face value, Theorem \ref{thm:intro} applies to smooth solutions, but here we outline how to apply it to obtain information about viscosity solutions in several situations. We do not define viscosity solutions here, but rather refer to \cite{MR3148110} or \cite{schwab2014regularity}.

First, consider the initial-boundary value problem
\begin{alignat*}{2}
u_t - Iu &= f(x,t) \qquad &&\text{ in } \qquad  B_1\times (-1,0],\\
u &= g \qquad &&\text{ on } \qquad  (\R^n\sm B_1)\times (-1,0],
\end{alignat*}
where $g$ is a bounded continuous function. A consequence of Theorem \ref{thm:intro} and a certain approximation procedure is that \emph{there exists} a viscosity solution $u$ to this problem which also satisfies the conclusions of the theorem. This approximation procedure is explained in the appendix, Section \ref{ss:approx}.

Second, say that the operator $I$ in the initial-boundary value problem above admits a comparison principle between a viscosity supersolution and a viscosity subsolution. Then the problem has a unique solution, and so (applying the previous observation) \emph{any} solution will inherit the estimates of Theorem \ref{thm:intro}. This is known to hold when $I$ is translation-invariant, i.e. when it commutes with spatial translations. This was established for stationary solutions in \cite{Barles2008567} and \cite{MR2494809}; the parabolic case is essentially identical, with some details given in the appendix of \cite{MR2737806}. Thus for a \emph{translation-invariant} operator $I$, our theorem applies equally well to viscosity solutions. In \cite{Barles2008567}, certain classes of $x$-dependent operators are also shown to admit a comparison principle, and these results may also be applied to parabolic equations.

Third, say that the operator $I$ admits a comparison principle between one viscosity solution and one special solution constructed by approximation (which will inherit our estimate). This leads to the same conclusion as in the previous situation. The most obvious use of this remark is when our special solution turns out to be classical; this is the case when $I$ is convex or concave, and we explain the procedure in detail in Section \ref{sec:applications}. The point is that a comparison principle between a classical solution and a viscosity solution is an immediate consequence of the definition of viscosity solution. There has also been some recent work (see \cite{Mou2015}) on comparison principles between a viscosity solution and another viscosity solution with extra regularity properties, which may be of use in similar arguments.

The paper is organized as follows: Section \ref{sec:preliminaries} explains our notation and gives basic definitions. Section \ref{sec:main} contains the proof of Theorem \ref{thm:intro}. Then in Section \ref{sec:applications} we discuss how to apply the a priori estimate and use it to derive a non-linear parabolic Schauder theorem. The appendix contains some lemmas about H\"older spaces and approximation of viscosity solutions by smooth solutions of perturbed problems.

\section{Preliminaries}\label{sec:preliminaries}

\subsection{Notation}

For functions $q = q(x)$ (typically of just time or just space) we use the notation,
\begin{align*}
[q]_{C^{0,\a}_* (A)} &:=  \sup_{x,y\in A}\frac{|q(x)-q(y)|}{|x-y|^\a},\\
\|q\|_{C^{0,\a}_* (A)} &:=\sup_{x\in A}|q(x)| +[q]_{C^{0,\a}_* (A)}.
\end{align*}
The spaces $C^{k,\a}_*$ are defined in the usual way, demanding that all derivatives of $q$ of order $k$ are in $C^{0,\a}_*$, and the lower-order derivatives are bounded.

Given $\W\subset\R^n, \ A\subset\R^n\times\R$ and $\a,\t\in(0,1)$,
\begin{align*}
\p_p\1\W \times (t_1,t_2]\2 &:= \1\R^n \times \{t_1\}\2 \cup \1(\R^n\sm \W)\times(t_1,t_2]\2,\\
[u]_{C^{0,\a}(A)} &:= \sup_{(x,t),(x',t') \in A} \frac{|u(x,t)-u(x',t')|}{(|x-x'| + |t-t'|^{1/\s})^\a},\\
\|u\|_{C^{0,\a}(A)} &= \sup_A|u| +[u]_{C^{0,\a}(A)}\\
\d_\t u(x,t) &:= u(x,t) - u(x,t-\t),\\
\|u\|_{L^1_\s} &:= \int_{\R^n} |u(y)|\min\11,|y|^{-(n+\s)}\2dy.
\end{align*}
We will frequently use the cylinders $Q_r(x,t) := B_r(x)\times (t-r^\s,t)$. Whenever we omit the center we are assuming that they get centered at the origin in space and time. As it is standard for evolution type problems we consider the parabolic topology on $\R^n\times\R$ generated by neighborhoods of the form $Q_r(x,t)$ with respect to the point $(x,t)$.

To discuss classical solutions, we say that (a function of space only) is in $\text{Class}(B_r(x_0))$ if $u\in C_*^{1,\s-1+\e}(B_r(x_0))$ for some $\e>0$ and $u\in L^1_\s$. A function of space and time belongs to $\text{Class}(Q_r(x_0,t_0))$ if it is continuously differentiable in time, has $u(\cdot,t)\in C_*^{1,\s-1+\e}(B_r(x_0))$ for each $t\in(t_0-r^\s,t_0]$, and lies in $C((t_0-r^\s,t_0]\rightarrow L^1_\s)$.

\subsection{Non-local Uniformly Elliptic Operators}

Given $\s \in (0,2)$, a measurable kernel $K:\R^n\to[0,\8)$ and a vector $b \in \R^n$, the non-local linear operator $L_{K,b}^\s$ is defined by,
\begin{align*}
L_{K,b}^\s u(x) &:= (2-\s)\int \d u(x;y)\frac{K(y)dy}{|y|^{n+\s}} + b\cdot Du(x),\\
\nonumber \d u(x;y) &:= u(x+y) - u(x) - Du(x)\cdot y\chi_{B_1}(y).
\end{align*}

Given that $u\in\text{Class}(\W)$, it is enough that $K$ is bounded for the integral to converge. In order for $L^\s_{K,b}$ to be uniformly elliptic it suffices that $K$ is bounded away from zero (more general conditions on the kernel under which the elliptic theory can be developed had been recently studied in \cite{schwab2014regularity}). Here is the family of linear operators we will be dealing in this paper.

\begin{definition}
For $\s\in(1,2)$ and $0<\l\leq\L<\8$, let $\cL_0 = \cL_0^\s(\l,\L)$ be the family of linear operators $L^\s_{K,b}$ such that
\begin{align*}
K(y)\in[\l,\L] \text{ for all } y\in\R^n \qquad\text{and}\qquad |b|\leq\frac{\L}{\s-1}.
\end{align*}
\end{definition}

This family is scale invariant in the following sense. Consider a rescaling $\tilde u(x)= u(\kappa x)$ for $\k\in(0,1)$, then we have that,
\[
\1L^\s_{\tilde K,\tilde b}\tilde u\2(x) = \k^\s \1L^\s_{K,b}u\2(\k x),
\]
where,
\begin{align*}
\tilde K(y) &:= K(\kappa y) \qquad\text{and}\qquad \tilde b := \kappa^{\s-1}\1b + (2-\s)\int_{B_1\sm B_\k}\frac{yK(y)}{|y|^{n+\s}}dy\2.
\end{align*}
Scale invariance then means that $L^\s_{K,b} \in \cL_0$ implies $L^\s_{\tilde K,\tilde b} \in \cL_0$ as well.

We will use the following extremal operators,
\[
\cM^+ u(x) := \sup_{L\in\cL_0} Lu(x) \qquad\text{and}\qquad \cM^-u(x) := \inf_{L\in\cL_0} Lu(x).
\]
By the scale invariance of $\cL_0$ we get the following homogeneity for $\cM^\pm$: Given a rescaling $\tilde u(x) := u(\k x)$,
\[
\1\cM^{\pm} \tilde u\2(x) = \k^\s \1\cM^\pm u\2(\k x).
\]

Non-linear operators are now obtained as a combination of linear operators.

\begin{definition}\label{def:operator}
A function $I:\W\times \cup_{B_r(x)\ss \W}\text{Class}(B_r(x)) \to \R$ (written as $I(x,u)=Iu(x)$) is called a \emph{non-local operator} of order $\s$. 
We say that $I$ is \emph{uniformly elliptic} if for every $B_r(x) \in \W$, and $u,v\in \text{Class}(B_r(x))$,
\begin{align*}
\cM^-(u-v)(x)\leq Iu(x) -Iv(x) \leq \cM^+(u-v)(x).
\end{align*}
We say that $I$ is $1$-\emph{continuous} if $Iu(\cdot)$ is continuous on $B_r(x)\ss \W$ for every $u\in \text{Class}(B_r(x))$.
We say that $I$ is $\8$-\emph{continuous} if $Iu(\cdot)$ is continuous on $B_r(x)\ss \W$ for every $u\in \text{Class}(B_r(x))\cap L^\8(\R^n)$.
We say that $I$ is \emph{translation-invariant} if it commutes with translation operators, i.e. if $T_h v(x)=v(x-h)$, $B_r(x) \ss \W$, and $|h|<r$, then $T_h Iu(x)=IT_h u(x)$ for $u\in \text{Class}(B_r(x))$.
\end{definition}

Let us summarize some properties and examples of non-local operators:
\begin{enumerate}
\item If $I$ is a uniformly elliptic non-local operator on $\W$, then $Iu(x)$ depends only on the bounded sequence $\{L u(x)\}_{L\in \cL_0}$ and $x$. We may therefore write $Iu(x)=I(x,\{L u(x)\})$ as an operator defined on the product of $\W$ and a subset of the space of sequences $l^\8 (\cL_0)$. The uniform ellipticity of $I$ guarantees that for any pair of sequences $\{a_L\},\{b_L\}$ in $l^\8 (\cL_0)$ it is defined for, we have
 \[
  \inf \{a_L-b_L\} \leq I(x,\{a_L\}) -I(x,\{b_L\}) \leq \sup \{a_L-b_L\}.
 \]
It will be helpful later that (for each $x$) there is a way of extending $I$ to the entire space $l^\8 (\cL_0)$ so that it preserves the above uniform ellipticity property. One way of accomplishing this is via the formula
\[
 I(x,\{a_L\}) = \inf_{\{b_L\}\in U} \1I(x,\{b_L\}) + \sup\{a_L-b_L \}\2,
\]
where $U\ss l^\8 (\cL_0)$ represents those sequences on which $I(x,\cdot)$ is already defined (i.e. ones of the form $\{L v(x)\}$ for $v\in \text{Class}(\W)$).
 \item Any translation-invariant uniformly elliptic non-local operator is $1$-continuous. The extremal operators $\cM^\pm$ are examples of translation-invariant uniformly elliptic non-local operators. More generally, so is any operator of the form
 \[
  Iu = \inf_\a \sup_\b L_{\a,\b}u
 \]
provided the collection $\{L_{\a,\b}\}\ss \cL_0$.
 
 \item An operator of the form
 \[
  Iu = \inf_\a \sup_\b L_{\a,\b,x}u =\inf_\a \sup_\b (2-\s)\int \d u(x;y)\frac{K_{\a,\b}(x,y)dy}{|y|^{n+\s}} + b_{\a,\b}(x)\cdot Du(x)
 \]
where $\{L_{\a,\b,x}\}\ss \cL_0$, is a uniformly elliptic non-local operator. A sufficient condition for this to be $1$-continuous is that the family $\{b_{\a,\b}\}$ is equicontinuous and that $\{K_{\a,\b}(\cdot,y)\}$ is equicontinuous in $\a,\b,$ and $y$; $\8$-continuity only requires the weaker condition
\[
 \| K_{\a,\b}(x+h,\cdot) - K_{\a,\b}(x,\cdot)\|_{L^1_\s}\rightarrow 0
\]
uniformly in $\a,\b$, as $h\rightarrow 0$. The operators we consider in Section \ref{sec:applications} are $\8$-continuous.
\item The notion of being $1$- or $\8$-continuous is a very weak one, and is related to the stability properties of viscosity solutions. As we are proving an a priori estimate, we do not require it; however, to pass to viscosity solutions it will typically be needed. It is typical of works treating $x$-dependent non-local equations to build this kind of assumption into the notion of non-local operator (see, e.g. \cite{MR2781586}).
\end{enumerate}

% \begin{definition}\label{def:operator}
% A function $I:\W\times\R^{\cL_0} \to \R$ is said to determine a non-local operator of order $\s$ by
% \begin{align*}
% Iu(x,t) := I\1x,(Lu(x,t))_{L\in\cL_0}\2.
% \end{align*}
% We say that $I$ is uniformly elliptic if for every $x \in \W$ and $\1l^{(1)}_L\2_{L\in\cL_0}, \1l^{(2)}_L\2_{L\in\cL_0} \in \R^{\cL_0}$,
% \begin{align*}
% \inf_{L\in\cL_0}\1 l^{(1)}_L-l^{(2)}_L\2 &\leq I\1x,\1l^{(1)}_L\2_{L\in\cL_0}\2 - I\1x,\1l^{(2)}_L\2_{L\in\cL_0}\2 \leq \sup_{L\in\cL_0}\1l^{(1)}_L-l^{(2)}_L\2.
% \end{align*}
% \end{definition}

The following estimate can be found in \cite{chang2014h}.

\begin{theorem}[Krylov-Safonov]\label{thm:KS}
There exists a universal exponent $\bar\a\in (0,1)$ and constant $C$ such that for $u \in \text{Class}(Q_1)$ satisfying in $Q_1$,
\begin{align*}
u_t - \cM^+ u &\leq |f(t)| \quad \text{ and } \quad u_t - \cM^- u \geq -|f(t)|,
\end{align*}
then
\begin{align*}
\|u\|_{C^{0,\bar\a}\1 Q_{1/2}\2} \leq C\1\sup_{t\in(-1,0]}\|u(t)\|_{L^1_\s} +\|f\|_{L^1(-1,0]}\2.
\end{align*}
In particular, given $I$ uniformly elliptic with $I(x,0) = 0$, a similar estimate holds when $u$ satisfies,
\begin{align*}
u_t - Iu = f(x,t) \text{ in $Q_1$}.
\end{align*}
In this case $\|f\|_{L^1(-1,0]}$ has to be replaced with $\|\sup_{x\in B_1}|f(x,\cdot)|\|_{L^1 (-1,0]}$ or just $\sup_{Q_1}|f|$ in the $C^{0,\bar\a}$ estimate.
\end{theorem}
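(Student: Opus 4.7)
The plan is to adapt the Caffarelli--Silvestre non-local Krylov--Safonov program to parabolic cylinders. The proof decomposes naturally into (a) a non-local parabolic ABP-type lemma, (b) a measure or ``point'' estimate on sub-level sets, (c) an $L^\varepsilon$-type bound obtained by a stacked-cube iteration, and (d) a dyadic scaling that converts the $L^\varepsilon$ estimate into oscillation decay and hence $C^{0,\bar\alpha}$ regularity. The second half of the statement, concerning $u_t - Iu = f$, is an immediate consequence of the extremal case: uniform ellipticity of $I$ with $I(x,0)=0$ forces $\cM^-u \leq Iu \leq \cM^+ u$, so a solution of the nonlinear equation satisfies the two extremal inequalities with right-hand side $\pm|f(x,t)|$.

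First I would prove a non-local parabolic ABP inequality: for a nonnegative supersolution of $u_t - \cM^- u \geq -|f(t)|$ one controls the infimum of $u$ on a small sub-cylinder by an $L^n$-type integral of $|f|$ together with a controlled tail contribution, using a time-sliced convex envelope of $-u$ as in \cite{MR2494809}. Second, I would derive the central point estimate: if $u \geq 0$ in $\R^n \times (-1,0]$, satisfies the extremal inequality in $Q_1$, and attains a value at most $1$ somewhere in an inner sub-cylinder of $Q_1$, then $|\{u \leq M\} \cap Q_1| \geq \mu|Q_1|$ for universal $M$ and $\mu \in (0,1)$, provided $\|f\|_{L^1(-1,0]}$ is small. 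This is obtained by constructing a smooth, compactly supported barrier $\psi$ with $\cM^-\psi \geq c>0$ on an inner cylinder and $\psi\leq 0$ outside (the $(2-\s)$ factor in the definition of $\cL_0$ ensures such a $\psi$ exists with bounds uniform in $\s$), and applying ABP to $u+\psi$. Third, I would iterate the point estimate via the parabolic analogue of the Krylov--Safonov stacked-cube decomposition to obtain a power-law decay of the super-level sets of $u$, and finally apply this to $\sup_{Q_r} u - u$ and to $u - \inf_{Q_r} u$ on nested cylinders to get geometric oscillation decay, which is exactly $C^{0,\bar\alpha}$ continuity.

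The main obstacle throughout is the non-local tail: at every scale $\cM^\pm u(x)$ depends on all of $u$, and a localization argument is required. The quantity $\sup_{t \in (-1,0]} \|u(t)\|_{L^1_\s}$ in the conclusion arises exactly because, after truncating $u$ outside $B_1$, the discarded mass on each annulus $B_{2^{k+1}}\sm B_{2^k}$ contributes to $\cM^\pm(u\chi_{B_1^c})$ through the weight $(2-\s)|y|^{-(n+\s)}$, whose sum telescopes to the $L^1_\s$ seminorm; the same $(2-\s)$ prefactor is what keeps all constants, including $\bar\alpha$, uniform as $\s \to 2^-$, so that the estimate is consistent with classical second-order Krylov--Safonov in the limit. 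The careful bookkeeping needed to combine this truncation at every scale with the iteration argument, and in particular to show that the barrier construction in step (ii) is robust in both $\s\to 2^-$ and $\s\to 0^+$ (the latter outside the present range but handled identically), is the main technical content of \cite{chang2014h}, to which the statement defers.
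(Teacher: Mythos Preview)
The paper does not prove this theorem at all; it is quoted as a known result from \cite{chang2014h}. Your proposal correctly identifies this deferral in its final sentence, and the outline you give---non-local parabolic ABP, barrier-based point estimate, stacked-cube $L^\varepsilon$ iteration, and dyadic oscillation decay, with tail control via the $L^1_\s$ norm---is the standard and correct architecture for such a result. There is nothing to compare against here since the paper offers no argument of its own.
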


The theorem applies to viscosity solutions as well, but we will not require that here. From now on we fix $\bar \a \in (0,1)$ to be the exponent in the theorem above.

Finally we introduce the following semi-norm in order to measure the regularity of the boundary data in an integrable fashion in space,
\begin{align*}
[g]_{C^{0,\gamma/\sigma}_{\sigma,r}(a,b]} &:= \sup_{(t-\t,t]\ss(a,b]} \left\|\frac{\d_\t g(t)}{\t^\b}\chi_{\R^n \sm B_r}\right\|_{L^1_\s},\\
&= \sup_{(t-\t,t]\ss(a,b]} \int_{\R^n\sm B_r} \frac{|g(y,t)-g(y,t-\t)|}{\t^\b}\min(1,|y|^{-(n+\s)})dy.
\end{align*}
We say that $g \in C^{0,\gamma/\sigma}_{\sigma,r}(a,b]$ if $[g]_{C^{0,\gamma/\sigma}_{\sigma,r}(a,b]}<\8$.

\begin{remark}
The estimate in Theorem \ref{thm:KS} is one of the main tools we will use in Section \ref{sec:main} in order to prove our result. The same strategy for a more general class of operators, as the one considered in the recent paper \cite{schwab2014regularity}, or other current research such as in \cite{2014arXiv1412.7566K,2015arXiv150908320D,2014arXiv1405.4970K}, could be applied as long as a similar estimate, controlled in terms of $\sup_{t\in(-1,0]}\|u(t)\|_{L^1_\s}$, is available. 
\end{remark}

\subsection{Precise Statement of Main Theorem}

We now state the theorem which we will prove in this paper, using the notation introduced above.

\begin{theorem}\label{thm:intro2}
Let $\s\in (1,2)$, $I$ be uniformly elliptic of order $\s$ with $I0 = 0$, and $u \in \text{Class}( Q_2) $ satisfies
\begin{align*}
u_t - Iu &= f(x,t) \text{ classically in } Q_2,\\
u &= g \text{ on } \p_p Q_2.
\end{align*}
Assume that for all $x\in B_2$, $f(x,\cdot) \in C^{0,\gamma/\s}_*[-2^\s,0]$ and $g \in C^{0,\gamma/\sigma}_{\sigma,2}(-2^\s,0]$ for some $\gamma \in (0, \bar \a)$ where $\bar \a$ is the exponent from the Krylov-Safonov theorem. Then $u_t$ exists pointwise, and for some constant $C>0$ depending on $\min(\gamma,(\bar \a-\gamma))$,
\[
 \|u_t\|_{C^{0,\gamma}(Q_{1/2})} \leq C\1\sup_{t\in(-2^\s,0]}\|u(t)\|_{L^1_\s} + [g]_{C^{0,\gamma/\sigma}_{\sigma,2}(-2^\s,0]} + \sup_{x\in B_2}\|f(x,\cdot)\|_{C^{0,\gamma/\s}_*(-2^\s,0]}\2.
\]
Assume instead that $f$ is only bounded. Then for every $\b<1$, there is a constant $C=C(\b)$ such that
\[
\sup_{x\in B_1} [u(x,\cdot)]_{C_*^{0,\b}([-1,0])} \leq C\1\sup_{t\in(-2^\s,0]}\|u(t)\|_{L^1_\s}  + \sup_{Q_2}|f|\2.
\]
The constants depend only on the ellipticity constants of $I$, and in particular remain uniform as $\s\rightarrow 2$.
\end{theorem}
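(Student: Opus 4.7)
The plan is to establish a uniform-in-$\t$ H\"older estimate for the rescaled time-incremental quotients
\[
v_\t(x,t) := \d_\t u(x,t)/\t^{\g/\s}, \qquad \t\in(0,2^\s),
\]
and then recover $u_t$ and its H\"older bound via a Zygmund-type argument on second differences of $u$ in time. The starting point is that uniform ellipticity of $I$ applied to the pair $u(\cdot,t),\ u(\cdot,t-\t)$ gives the extremal inequalities
\[
(\d_\t u)_t - \cM^+(\d_\t u) \leq |\d_\t f|, \qquad (\d_\t u)_t - \cM^-(\d_\t u) \geq -|\d_\t f|.
\]
Next I would truncate the tail: set $w:=\d_\t u\cdot\chi_{B_2}$, and use $u=g$ outside $B_2$ together with subadditivity of $\cM^\pm$ (via $\cM^+(\d_\t u)-\cM^+ w\leq \cM^+(\d_\t g\cdot\chi_{\R^n\sm B_2})\leq C\|\d_\t g\cdot\chi_{\R^n\sm B_2}\|_{L^1_\s}$ for $x\in B_1$) to obtain extremal inequalities for $w$ on $Q_1$ whose right-hand side is controlled by $|\d_\t f|+C\|\d_\t g\cdot\chi_{\R^n\sm B_2}\|_{L^1_\s}$. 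Since both terms are bounded by $\t^{\g/\s}$ times the data, $v_\t = w/\t^{\g/\s}$ satisfies on $Q_1$ extremal inequalities whose right-hand side is bounded uniformly in $\t$ by $C_0:=\sup_x\|f(x,\cdot)\|_{C^{0,\g/\s}_*}+[g]_{C^{0,\g/\s}_{\s,2}}$.

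Applying Theorem \ref{thm:KS} to $u$ itself gives $u\in C^{0,\bar\a}(Q_1)$ in the parabolic metric, and since $\g<\bar\a$ this yields the a priori bound $\|v_\t\|_{L^\8(Q_1)}\leq K_0$ uniformly in $\t$. The heart of the proof is then a diminish-of-oscillation statement for $v_\t$ on dyadic cylinders, with constants independent of $\t$ (this is Lemma \ref{lem:diminish_of_oscilation}), and this is where I expect the \emph{main obstacle}. Under the natural parabolic rescaling to smaller cylinders, the equation for $v_\t$ acquires a right-hand side that appears to degenerate as $\t\to0$, while simultaneously the values of $u$ far from the cylinder get amplified under the same scaling; these two phenomena pull in opposite directions and prevent a straightforward application of Krylov--Safonov at small scales. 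The idea explained in the introduction is to assume a small a priori H\"older regularity for $v_\t$ and use it to control the contribution of $v_\t$ at small $\t$ in terms of its values at $\t$ bounded away from zero, so that the degenerate part of the right-hand side can be absorbed into the existing H\"older seminorm.

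Iterating this diminish of oscillation at geometric scales produces $v_\t\in C^{0,\g}(Q_{1/2})$ uniformly in $\t$, with constant depending on $\min(\g,\bar\a-\g)$ through the improvement factor and through the required smallness of the a priori exponent. Converting this into a Zygmund-type second-difference bound $|u(x,t+\t)-2u(x,t)+u(x,t-\t)|\leq C\t^{2\g/\s}$ (and, if $2\g/\s$ does not yet exceed one, iterating the same incremental-quotient machinery a finite number of times on $v_\t$ until the effective exponent passes one) yields the existence of $u_t$ together with the desired estimate $\|u_t\|_{C^{0,\g}(Q_{1/2})}\leq C(K_0+C_0)$. For the second statement ($f$ only bounded and no extra regularity on $g$), the same scheme applies to $\tilde v_\t:=\d_\t u/\t^\b$ for any $\b<1$: its extremal equation now carries a right-hand side that grows like $\t^{-\b}$ rather than staying bounded, but for $\b<1$ this growth is summable across dyadic scales so the iteration still closes, producing the claimed H\"older-in-time estimate for $u$ with a constant $C(\b)$ that blows up as $\b\to 1$.
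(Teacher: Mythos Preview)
Your overall strategy matches the paper's: derive extremal inequalities for $\d_\t u$, truncate the tail to trade boundary data for a right-hand side, invoke the diminish-of-oscillation Lemma \ref{lem:diminish_of_oscilation}, and iterate to push the time exponent past one. For the H\"older-data conclusion this is essentially the paper's argument, with one organizational difference: you feed in the H\"older regularity of $f,g$ from the start (taking $\b=\gamma/\s$), whereas the paper first runs the bounded-data argument (Corollary \ref{cor:bdd_rhs}) to bring $\b$ close to $1$ and uses the H\"older hypothesis only at the very last step to cross $1$.

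There is, however, a genuine gap in your treatment of the bounded-data case. Fixing $\b$ near $1$ and working directly with $\tilde v_\t = \d_\t u/\t^\b$ does not work: the right-hand side for $\tilde v_\t$ is of order $\t^{-\b}$, unbounded as $\t\to 0$, while Lemma \ref{lem:diminish_of_oscilation} requires a \emph{uniform} bound $\d\leq\d_0$ on the right-hand side of the equation for $\d_\t u$; no summation in $\t$ is ever performed. Equally important, the lemma's a priori hypothesis $\sup_{i}\mu^{\a i}[\d_\t u/\t^\b]_{C^{0,\e}}\leq 1$ is not available for $\b$ near $1$ from Krylov--Safonov on $u$, which only supplies exponents of order $\bar\a/\s$. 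The paper's remedy (Corollary \ref{cor:bdd_rhs}) is to work with $\d_\t u$ itself---its right-hand side is bounded by $2\sup|f|$, independent of $\t$---start from a small $\b_0<\bar\a/4$, and apply Corollary \ref{cor:improvement_dif_quot} repeatedly to step up $\b_k=\b_0+k\bar\a/(2\s)$. The constraint $1>\b_k+\a/\s+\e_k/\s$ (the $\gamma=0$ case of Corollary \ref{cor:iteration}) is exactly what makes each rescaling legitimate, and this scaling balance---not any summability of $\t^{-\b}$---is the precise place where ``$\b<1$'' enters. The same caution applies to your proposed iteration in the H\"older case when $2\gamma/\s<1$: one must keep the equation for $\d_\t u$ (with its $\t^{\gamma/\s}$ right-hand side) and track $\b$ only in the oscillation hypothesis, not divide the equation through by growing powers of $\t$.
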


This theorem implies Theorem \ref{thm:intro}.

\section{Proof of the Main Theorem}\label{sec:main}

The goal of this section is to establish Theorem \ref{thm:main} to follow. It includes only the first half in Theorem \ref{thm:intro2}. The case of bounded right-hand side and bounded boundary data is obtained as a preliminary result in Corollary \ref{cor:bdd_rhs}.

\begin{theorem}\label{thm:main}
Let $I$ be uniformly elliptic with $I(x,0) = 0$ and $u \in \text{Class}( Q_2)$ satisfies,
\begin{align*}
u_t - Iu &= f(x,t) \text{ in } Q_2,\\
u &= g \text{ on } \p_p Q_2
\end{align*}
Assume that for all $x\in B_2$, $f(x,\cdot) \in C^{0,\gamma/\s}_*[-2^\s,0]$ and $g \in C^{0,\gamma/\sigma}_{\sigma,2}(-2^\s,0]$ for some $\gamma \in (0, \bar \a)$ where $\bar \a$ is the exponent from the Krylov-Safonov theorem. Then $u_t$ exists pointwise, and for some constant $C>0$ depending on $\min(\gamma,(\bar \a-\gamma))$,
\[
 \|u_t\|_{C^{0,\gamma}(Q_{1/2})} \leq C\1\sup_{t\in(-2^\s,0]}\|u(t)\|_{L^1_\s} + \sup_{x\in B_2}\|f(x,\cdot)\|_{C^{0,\gamma/\s}_*(-2^\s,0]} + [g]_{C^{0,\gamma/\sigma}_{\sigma,2}(-2^\s,0]}\2.
\]
\end{theorem}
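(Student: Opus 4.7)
The plan is to establish a uniform (in the time increment $\t$) H\"older estimate for the discrete time derivatives $v_\t(x,t):=\t^{-\b}\d_\t u(x,t)$ on $Q_{1/2}$, and then extract $u_t$ together with its H\"older regularity by letting $\t\to 0^+$. First I would trade the boundary data $g$ for a right-hand side by multiplying $u$ by a smooth cutoff $\eta$ equal to $1$ on $B_{3/2}$ and supported in $B_2$; since $\eta u = u$ on $Q_1$ and $I(\eta u)$ differs from $Iu$ only through integrals over $\R^n\sm B_{3/2}$ where $u=g$, the truncated problem reads $u_t - Iu = f+F$ on $Q_1$ with an extra forcing $F$ whose $t$-increments $\d_\t F$ are controlled in $L^\8$ by $[g]_{C^{0,\g/\s}_{\s,2}(-2^\s,0]}$ via the weighted-integral definition of $\|\cdot\|_{L^1_\s}$. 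Subtracting the equation at times $t$ and $t-\t$, dividing by $\t^\b$, and invoking uniform ellipticity gives
\begin{align*}
\cM^- v_\t(x,t) \leq (v_\t)_t - \t^{-\b}\d_\t(f+F)(x,t) \leq \cM^+ v_\t(x,t).
\end{align*}
The natural starting choice $\b=\g/\s$ makes the forcing uniformly bounded in $\t$ by the quantity on the right-hand side of the theorem, and the homogeneity $\cM^\pm(u(\k\cdot))=\k^\s(\cM^\pm u)(\k\cdot)$ is compatible with the desired H\"older exponent $\g$ measured in parabolic distance.

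The heart of the argument is a Campanato-style diminish-of-oscillation lemma for $v_\t$: I would try to show that for every $(x_0,t_0)\in Q_{1/2}$ and all small $r$,
\begin{align*}
\operatorname{osc}_{Q_r(x_0,t_0)} v_\t \leq \theta\,\operatorname{osc}_{Q_{2r}(x_0,t_0)} v_\t + C r^\g
\end{align*}
with some fixed $\theta<1$ and $C$ independent of $\t$. Centering at $(x_0,t_0)$ and rescaling to the unit cylinder via $w(x,t):=v_{r^\s \t}(rx+x_0,r^\s t+t_0)-\ell(x)$, for a well-chosen affine $\ell$ in the spatial variable, Theorem \ref{thm:KS} yields a $C^{0,\bar\a}$ bound on $w$; since $\g<\bar\a$, pulling back produces the $r^\g$ decay up to the contribution of the nonlocal spatial tail. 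The subtle point is that the $L^1_\s$-tail of $w$ involves $v_{r^\s\t}$ on all of $\R^n$, i.e.\ the same family at a \emph{smaller} time-increment $r^\s\t$ than $\t$ itself. Closing this loop is where the paper's key idea enters: one posits, as an induction hypothesis, a small a priori H\"older bound in the $\t$-variable with some tiny exponent $\eta$; this controls $v_{r^\s\t}-v_\t$ by roughly $C\t^{\eta/\s}$, and choosing $\eta$ small enough allows the coupling error to be absorbed geometrically by the Campanato iteration.

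Iterating the decay on dyadic scales gives $[v_\t]_{C^{0,\g}(Q_{1/2})}\leq C$ uniformly in $\t$. Applying this to the pair $(x,t),(x,t-\t)$ produces the second-difference bound $|u(x,t)-2u(x,t-\t)+u(x,t-2\t)|\leq C\t^{\b+\g/\s}$. A second bootstrap, raising $\b$ above $\g/\s$ and using the regularity already obtained to tame the now mildly singular forcing $\t^{-\b}\d_\t(f+F)$, eventually reaches any $\b$ with $\b+\g/\s>1$. At that point the telescoping identity
\begin{align*}
\frac{\d_\t u}{\t} - \frac{\d_{2\t} u}{2\t} = -\frac{u(x,t)-2u(x,t-\t)+u(x,t-2\t)}{2\t}
\end{align*}
together with the second-difference bound makes $\{\t^{-1}\d_\t u\}_{\t>0}$ Cauchy in $C^0(Q_{1/2})$. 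The limit is $u_t$, and the same dyadic telescoping applied to the spatial and temporal H\"older increments of $v_\t$ produces the claimed $C^{0,\g}(Q_{1/2})$ bound on $u_t$.

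The main obstacle is the diminish-of-oscillation lemma: the Krylov--Safonov step leaves behind a spatial tail of $v_\t$ that couples its behavior at one time-increment to its values at much smaller time-increments, and the natural forcing degenerates in exactly the same small-$\t$ regime. The self-referential bookkeeping---choosing $\b$, the small H\"older exponent $\eta$, and the scaling so that the Campanato iteration absorbs all coupling errors uniformly in $\t$---is the delicate step, and it is the point at which this paper's ideas depart from the existing approaches.
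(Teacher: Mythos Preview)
Your overall architecture is that of the paper: truncate to convert the boundary data into a right-hand side, pass to the difference quotients $v_\t=\d_\t u/\t^\b$ which satisfy Pucci extremal inequalities, run a diminish-of-oscillation argument uniformly in $\t$, bootstrap $\b$ upward until the threshold is crossed, and then read off $u_t\in C^{0,\g}$. You have also correctly located the crux---the diminish-of-oscillation step is where the new idea lives, and some a priori small H\"older control is the device that makes it close.

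Where your description goes astray is in the mechanism of that step. You frame the obstruction as a coupling between $v_\t$ and $v_{r^\s\t}$ through the tail after rescaling, and propose to cure it by a H\"older bound ``in the $\t$-variable'' giving $|v_{r^\s\t}-v_\t|\lesssim \t^{\eta/\s}$. But that error does not decay as $r\to 0$ (and taking $\eta$ small makes $\t^{\eta/\s}$ larger, not smaller), so it cannot be absorbed into a fixed-$\t$ Campanato iteration of the form $\osc_{Q_r}v_\t\leq\theta\,\osc_{Q_{2r}}v_\t+Cr^\g$. The paper does not attempt a fixed-$\t$ iteration. In Lemma~\ref{lem:diminish_of_oscilation} the a priori hypothesis is a small $C^{0,\e}$ bound on $\d_\t u/\t^\b$ in the \emph{space-time} variables, uniformly in $\t$, with prescribed growth on expanding balls $B_{\m^{-i}}$. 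This hypothesis is used in two distinct ways. First, it lets one sum the spatial $L^1_\s$-tail of the rescaled function over annuli. Second---and this is the point you are reaching for---its time component feeds into an interpolation inequality (Corollary~\ref{lem:appendix3}) showing that $\sup_{\t>\bar\t}\osc_{Q_\m}\d_\t w/\t^\b$ already captures a fixed fraction of $\sup_{\t>0}$, up to a small error $C\bar\t^{\e/\s}$. The contradiction can therefore be run at some $\t\geq\bar\t$, where the forcing $\d/\t^\b$ on the right of the Pucci inequalities is bounded by $\d/\bar\t^\b$. Moving the offending $\t$ away from zero, rather than comparing $v_{r^\s\t}$ to $v_\t$, is the actual role of the a priori small H\"older bound. (The affine subtraction you introduce is also unnecessary: only a constant is subtracted, since the target is $C^{0,\a}$-type decay.)
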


The key step is established in the following lemma. Notice that for $\e=0$ the following statement is just a diminish of oscillation leading to a $C^{0,\a}$ estimate for the difference quotient.

\begin{lemma}[Diminish of Oscillation]\label{lem:diminish_of_oscilation}
Let $u \in \text{Class}( Q_2) $ satisfy the following inequalities in $Q_1$ for some $\d\geq0$ and every $\t \in(0,1)$,
\begin{align*}
(\d_\t u)_t - \cM^+ \d_\t u \leq \d \qquad\text{and}\qquad (\d_\t u)_t - \cM^- \d_\t u \geq -\d.
\end{align*}
Given $\b\in(0,1)$, $\a\in(0,\bar\a)$ and $\e \in(0,(\bar\a-\a))$, such that,
\[
\b+\e/\s<1,
\]
there exists constants $\m,\d_0\in(0,1)$ depending on $(\bar\a-\a)$ and $\e$, such that,
\[
\sup_{\substack{\t\in(0,1)\\i\in\N_0}} \mu^{\a i}\left[\frac{\d_{\t} u}{\t^\b}\right]_{C^{0,\e}\1B_{\m^{-i}}\times(-1,0]\2}\leq 1 \qquad \text{ and } \qquad \d\in[0,\d_0],
\]
imply,
\[
\sup_{\t\in(0,1)}\left[\frac{\d_{\t} u}{\t^\b}\right]_{C^{0,\e}(Q_\m)}\leq \m^\a.
\]
\end{lemma}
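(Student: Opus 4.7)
My plan is to argue by contradiction combined with applying the Krylov--Safonov theorem (Theorem~\ref{thm:KS}) at appropriate scales. Setting $v_\t := \d_\t u/\t^\b$ and normalizing by a constant so that $v_\t(0,0)=0$, the function satisfies extremal inequalities with right-hand side of size $\d/\t^\b$, and the hypothesis produces a uniform $L^1_\s$ bound $M:=\sup_{\t,t}\|v_\t(\cdot,t)\|_{L^1_\s}<\8$. Indeed, the oscillation growth $\m^{-\a i}$ on $B_{\m^{-i}}\times(-1,0]$ forces $|v_\t(y,t)|\lesssim|y|^{\a+\e}$ for $|y|\geq 1$, and $\a+\e<\bar\a<1<\s$ makes the tail integrable against $|y|^{-(n+\s)}$.

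The central dichotomy is based on the size of $\t$. For $\t\geq\m^\s$, the right-hand side $\d/\t^\b\leq\d\m^{-\s\b}$ is controlled, and Krylov--Safonov yields $\|v_\t\|_{C^{0,\bar\a}(Q_{1/2})}\leq C(M+\d\m^{-\s\b})$. Interpolating to the smaller exponent $\e$ on $Q_\m$ contributes a factor $\m^{\bar\a-\e}$, giving
\[
[v_\t]_{C^{0,\e}(Q_\m)}\leq C\m^{\bar\a-\e}(M+\d\m^{-\s\b}).
\]
Since $\bar\a-\e-\a>0$, I would choose $\m$ small enough that $CM\m^{\bar\a-\e-\a}\leq 1/2$, then $\d_0=\d_0(\m)$ small enough that the second term is also at most $\m^\a/2$. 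This gives the conclusion for large $\t$.

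For $\t<\m^\s$ the right-hand side $\d/\t^\b$ may blow up and Krylov--Safonov alone is inadequate. Following the strategy from the introduction, I would reduce this case to the previous one by choosing a positive integer $N$ with $N\t\in[\m^\s,2\m^\s]$ and exploiting the telescoping identity
\[
v_{N\t}(x,t) = N^{-\b}\sum_{k=0}^{N-1} v_\t(x, t-k\t).
\]
This yields a decomposition $v_\t = N^{\b-1}v_{N\t}+e$, in which the main term $N^{\b-1}v_{N\t}$ inherits a $\m^\a$ seminorm bound on $Q_\m$ from the large-$\t$ analysis (using $N^{\b-1}\leq 1$), and the error $e$ is controlled pointwise by $(N\t)^{\e/\s}\leq 2^{\e/\s}\m^\e$ via the hypothesis $[v_\t]_{C^{0,\e}(Q_1)}\leq 1$. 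The hard part will be converting this pointwise bound on $e$ into a H\"older seminorm bound on $Q_\m$; here the condition $\b+\e/\s<1$ is essential, ensuring the averaging error stays subcritical relative to the $\m^\a$ target. I expect this to be the most delicate step of the proof.
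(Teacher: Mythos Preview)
Your overall plan---handle large $\t$ by Krylov--Safonov and then reduce small $\t$ to large $\t$ using the assumed $C^{0,\e}$ control---is exactly the philosophy of the paper's proof, and the large-$\t$ portion together with the $L^1_\s$ tail bound is correct. The gap is in the small-$\t$ step, and it is real.

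Your telescoping yields $v_\t = N^{\b-1}v_{N\t}+e$ with the pointwise bound $|e|\leq C(N\t)^{\e/\s}\leq C\m^\e$. But a pointwise bound of size $\m^\e$ on $Q_\m$ gives only $[e]_{C^{0,\e}(Q_\m)}\leq C$, not $\m^\a$. Trying to do better, for points $(x,t),(y,s)\in Q_\m$ at parabolic distance $d$, one finds $|e(x,t)-e(y,s)|\leq C\min(d^\e,\m^\e)$; neither branch yields $\m^\a d^\e$, and no interpolation of the two does either, because the error is independent of $d$ while the target $\m^\a d^\e$ vanishes as $d\to0$. The hypothesis $\b+\e/\s<1$ does not rescue this: it does not produce an extra factor of $d^\e$ in the error.

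The paper circumvents this by a contradiction-and-rescaling step that you are missing. If the conclusion fails, there is a sub-cylinder $Q_r(x_0,t_0)\ss Q_\m$ on which $\osc\,\d_\t u/\t^\b>\m^\a r^\e$; rescaling by $\k=r/\m$ normalizes the bad seminorm to an oscillation on $Q_\m$ of size $>\m^{\a+\e}$. After this rescaling the reduction from small $\t$ to large $\t$ only has to preserve a \emph{single} number, the oscillation on $Q_\m$, not a full H\"older seminorm over all sub-scales. The paper does this via Corollary~\ref{lem:appendix3} applied to the one-variable function $z(a)=w(x,t+\k^{-\s}a)-w(y,s+\k^{-\s}a)$: it gives
\[
\sup_{\t>\bar\t}\Big|\frac{\d_\t w(x,t)}{\t^\b}-\frac{\d_\t w(y,s)}{\t^\b}\Big|\geq \frac12\sup_{\t>0}\Big|\frac{\d_\t w(x,t)}{\t^\b}-\frac{\d_\t w(y,s)}{\t^\b}\Big|-C\bar\t^{\e/\s},
\]
and here $\b+\e/\s<1$ is precisely the hypothesis of that interpolation. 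The error $C\bar\t^{\e/\s}$ now only competes with $\m^{\a+\e}$ (a fixed number), not with $\m^\a d^\e$ for arbitrary $d$; choosing $\bar\t$ small relative to $\m$ closes the argument. Your telescoping is morally the same reduction, but applied before the rescaling it cannot close; the contradiction/rescaling is what makes the error term harmless.
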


\begin{proof}
The value $\mu\in(0,1)$ will remain fixed for the duration of the proof; it will be specified later explicitly. Assume by contradiction that there exists $u$ that satisfies the following inequalities in $Q_1$,
\begin{align*}
&(\d_\t u)_t - \cM^+ \d_\t u \leq \d \qquad\text{and}\qquad (\d_\t u)_t - \cM^- \d_\t u \geq -\d.
\end{align*}
Moreover,
\begin{align*}
&\sup_{\substack{\t\in(0,1)\\i\in\N_0}} \mu^{\a i}\left[\frac{\d_{\t} u}{\t^\b}\right]_{C^{0,\e}\1B_{\m^{-i}}\times(-1,0]\2}\leq 1.
\end{align*}
However, there exists a cylinder $Q_{r}(x_0,t_0) \ss Q_\m$ for which,
\[
 \sup_{\t\in (0,1)}\osc_{Q_{r}(x_0,t_0)}\frac{\d_\t u}{\t^\b}>\mu^\a r^\e.
\]

Consider the following rescaling for $\kappa := r/\mu$,
\begin{align*}
&w(x,t) := \kappa^{-(\s\b+\e)}u\1\kappa x+x_0,\kappa^\s t+t_0\2,\\
\Rightarrow\qquad &\frac{\d_\t w}{\t^\b}(x,t) = \k^{-\e}\1\frac{\d_{\k^\s\t} u}{(\k^\s\t)^\b}\2(\k x + x_0, \k^\s t + t_0).
\end{align*}

The hypotheses for the difference quotients of $u$ imply that
\begin{align}
\label{eq:rescaledupbd}
&\sup_{\substack{\t\in(0,\kappa^{-\s})\\i\in\N_0}}\m^{\a i}\left[\frac{\d_\t w}{\t^\b}\right]_{C^{0,\e}\1B_{\m^{-i}}\times(-1,0]\2}\leq \sup_{\substack{\t\in(0,1)\\i\in\N_0}} \mu^{\a i}\left[\frac{\d_{\t} u}{\t^\b}\right]_{C^{0,\e}\1B_{\m^{-i}}\times(-1,0]\2}\leq 1,\\
\label{eq:contradiction}
&\sup_{\t\in(0,\kappa^{-\s})}\osc_{Q_{\mu}}\frac{\d_\t w}{\t^\b}=\kappa^{-\e}\sup_{\t\in(0,1)}\osc_{Q_{r}(x_0,t_0)}\frac{\d_\t u}{\t^\b}>\mu^{\a+\e}.
\end{align}

The next step consists of showing that a hypothesis similar to \eqref{eq:contradiction} holds taking the supremum with respect to $\tau$ away from zero. Namely $\t\in(\bar\t,\kappa^{-\s})$ for some $\bar\t\in(0,\k^{-\s})$ depending on $\m$ and $\e$. Indeed, define for $(x,t),(y,s)\in Q_\m$,
\[
z(a)=w\1x,t+\kappa^{-\s}a\2-w\1y,s+\kappa^{-\s}a\2,
\]
applying Corollary \ref{lem:appendix3} to $z$:
\begin{align*}
&\sup_{\t\in(\bar\t,\kappa^{-\s})} \left|\frac{\d_\t w(x,t)}{\t^{\b}}-\frac{\d_\t w(y,s)}{\t^{\b}}\right|,\\
\geq &\frac{1}{2}\sup_{\t\in(0,\kappa^{-\s})}\left|\frac{\d_\t w(x,t)}{\t^{\b}}-\frac{\d_\t w(y,s)}{\t^{\b}}\right| - C \bar\t^{\e/\s}\sup_{\t\in(0,\kappa^{-\s})}\left[\frac{\d_\t w}{\t^\b} \right]_{C^{0,\e}(Q_1)}.
\end{align*}
The second term on the right-hand side is controlled from \eqref{eq:rescaledupbd}. After taking the supremum in $(x,t),(y,s)\in Q_\m$ and using \eqref{eq:contradiction}, this gives
\begin{align}
\label{eq:contradiction2}
\sup_{\t\in(\bar\t,\kappa^{-\s})} \osc_{Q_\m}\frac{\d_\t w}{\t^\b} \geq \frac{\m^{\a+\e}}{2}-C\bar\t^{\e/\s} \geq \frac{\m^{\a+\e}}{4},
\end{align}
provided that $\bar\t^{\e/\s}$ is sufficiently small with respect to $\m^{\a+\e}$.

Let us fix some $\t\in(\bar\t,\kappa^{-\s})$ and
\[
v(x,t) = \frac{\d_\t w}{\t^\b}(x,t) - \frac{\d_\t w}{\t^\b}(0,0)
\]
By scaling and the homogeneity of the extremal operators we get that $v$ satisfies two inequalities in $Q_1$,
\begin{align*}
 v_t - \cM^+v \leq \k^{\s-\s\b-\e}\frac{\d}{\t^\b} \leq \frac{\d}{\t^\b} \qquad \text{ and } \qquad v_t - \cM^-v \geq -\k^{\s-\s\b-\e}\frac{\d}{\t^\b} \geq -\frac{\d}{\t^\b}
\end{align*}
In order to use the Krylov-Safonov Theorem \ref{thm:KS} we need to control the two terms in the right hand side of such estimate. By applying \eqref{eq:rescaledupbd} on each of the large annuli $B_{\m^{-i}}\sm B_{\m^{-(i-1)}}$, we have the following estimate uniform for the variable $t\in(-1,0]$ which is omitted,
\[
\|v\|_{L_\s^1}\leq \int_{B_1} |v| + \sum_{i=1}^\8 \int_{B_{\m^{-i}}\sm B_{\m^{-(i-1)}}} \frac{|v(y)|}{|y|^{n+\s}}\leq C\sum_{i=0}^\8 \m^{i(\s-\a-\e)} = \frac{C}{1-\m^{\s-\a-\e}} \leq C,
\]
provided that $\m$ is sufficiently small. The right-hand sides get controlled by one provided that $\d < \bar\t^\b =: \d_0$. Then, by the estimate in Theorem \ref{thm:KS} we get the following contradiction to \eqref{eq:contradiction2}, by fixing now $\m$ sufficiently small in terms of $(\bar\a-(\a+\e))$,
\[
 \osc_{Q_\m} v \leq C\m^{\bar\a}\leq \frac{\m^{\a+\e}}{8}.
\]
\end{proof}

In the following step we iterate Lemma \ref{lem:diminish_of_oscilation} at smaller scales. In this sense, we might take advantage of the modulus of continuity of the right-hand side.

\begin{corollary}[Iteration]\label{cor:iteration}
Let $u \in \text{Class}( Q_2)$ satisfies the following inequalities in $Q_1$ for some $\d\geq0$, $\gamma\in[0,1]$ and every $\t \in(0,1)$,
\begin{align*}
(\d_\t u)_t - \cM^+ \d_\t u \leq \d\t^{\gamma/\s} \qquad\text{and}\qquad (\d_\t u)_t - \cM^- \d_\t u \geq -\d\t^{\gamma/\s}.
\end{align*}
Given $\b\in(0,1)$, $\a\in(0,\bar\a)$ and $\e\in(0,(\bar\a-\a))$ such that,
\[
1+\gamma/\s > \b+\a/\s+\e/\s,
\]
there exists constants $\m,\d_0\in(0,1)$ depending on $(\bar\a-\a)$ and $\e$, such that,
\[
\sup_{\substack{\t\in(0,1)\\i\in\N_0}} \mu^{\a i}\left[\frac{\d_{\t} u}{\t^\b}\right]_{C^{0,\e}\1B_{\m^{-i}}\times(-1,0]\2}\leq 1 \qquad \text{ and } \qquad \d \in[0,\d_0],
\]
imply,
\begin{align*}
\sup_{\substack{i\in\N\\Q_{\m^i}(x_0,t_0)\ss Q_{1/2}\\\t\in(0,\m^{\s i})}} \left[\frac{\d_\t u}{\m^{\a i}\t^\b}\right]_{C^{0,\e}(Q_{\m^i}(x_0,t_0))} \leq 8.
\end{align*}
\end{corollary}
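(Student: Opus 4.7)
The plan is to prove the statement by strong induction on $i \in \N$: for every admissible $(x_0, t_0)$ (meaning $Q_{\m^i}(x_0, t_0) \ss Q_{1/2}$) and every $\t \in (0, \m^{\s i})$, the bound $[\d_\t u/\t^\b]_{C^{0,\e}(Q_{\m^i}(x_0, t_0))} \leq 8 \m^{\a i}$ will hold. The inductive step will consist of a single application of Lemma \ref{lem:diminish_of_oscilation} to the parabolic rescaling
\[
v(x, t) = (8 \m^{\a i})^{-1} \m^{-i(\s\b + \e)} u(\m^i x + x_0, \m^{\s i} t + t_0),
\]
centered at an admissible $(x_0, t_0)$ at scale $i + 1$. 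The scaling of $\cM^\pm$ will give
\[
(\d_\t v)_t - \cM^+ \d_\t v \leq \frac{\m^{i(\s + \gamma - \s\b - \a - \e)}}{8} \d \t^{\gamma/\s}, \qquad (\d_\t v)_t - \cM^- \d_\t v \geq -\frac{\m^{i(\s + \gamma - \s\b - \a - \e)}}{8} \d \t^{\gamma/\s},
\]
and the assumption $1 + \gamma/\s > \b + \a/\s + \e/\s$ will make this exponent strictly positive, so both right-hand sides will be bounded by $\d_0/8$ for $\t \in (0, 1)$ and $\d \in [0, \d_0]$.

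The key work is to verify the seminorm hypothesis for $v$. A direct computation will give
\[
\m^{\a j} \left[\frac{\d_\t v}{\t^\b}\right]_{C^{0,\e}(B_{\m^{-j}} \times (-1, 0])} = \frac{\m^{\a j}}{8 \m^{\a i}} \left[\frac{\d_{\t'} u}{(\t')^\b}\right]_{C^{0,\e}(B_{\m^{i-j}}(x_0) \times (t_0 - \m^{\s i}, t_0])}
\]
with $\t' = \m^{\s i} \t$, and admissibility will guarantee $(t_0 - \m^{\s i}, t_0] \ss (-1, 0]$. I will split the right-hand side into two regimes: when $j \leq i$ and $Q_{\m^{i-j}}(x_0, t_0) \ss Q_{1/2}$, the set embeds in $Q_{\m^{i-j}}(x_0, t_0)$, and the induction hypothesis at scale $i - j$ will bound the $u$-seminorm by $8 \m^{\a(i-j)}$, producing exactly $1$ after multiplication by the prefactor; when $j > i$, or at borderline centers where the cylinder exceeds $Q_{1/2}$, I will enclose $B_{\m^{i-j}}(x_0)$ inside a ball $B_{\m^{-\tilde j}}$ with $\tilde j$ within a fixed $\m$-dependent constant $K$ of $\max(0, j - i)$ and invoke the corollary's hypothesis, yielding a product bound of at most $(3/2)^\a/8 < 1$.

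With both hypotheses of Lemma \ref{lem:diminish_of_oscilation} verified for $v$, the lemma will deliver $[\d_\t v/\t^\b]_{C^{0,\e}(Q_\m)} \leq \m^\a$, and reversing the rescaling will produce $[\d_{\t'} u/(\t')^\b]_{C^{0,\e}(Q_{\m^{i+1}}(x_0, t_0))} \leq 8 \m^{\a(i+1)}$ for all $\t' \in (0, \m^{\s(i+1)})$, closing the induction. The base case $i = 1$ will be handled by the same rescaling with $i = 0$, drawing the seminorm hypothesis directly from the corollary's assumption.

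The hard part will be the borderline case in the seminorm verification: when $(x_0, t_0)$ is too close to $\partial Q_{1/2}$, the induction hypothesis is unavailable at scale $\m^{i-j}$ centered there, and the set $B_{\m^{i-j}}(x_0)$ must instead be covered by finitely many admissible cylinders of comparable scale with the seminorm chained across the covering. The constant $8$ in the conclusion is calibrated precisely to absorb this dimensional loss together with the $(3/2)^\a$ loss from the fallback to the corollary's hypothesis, while leaving just enough margin for the $\m^\a$ improvement per step supplied by Lemma \ref{lem:diminish_of_oscilation}.
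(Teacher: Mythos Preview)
Your inductive strategy is the same as the paper's, but you are missing one preliminary move that the paper makes and that dissolves precisely the ``borderline'' difficulty you flag. The paper first fixes $(x_0,t_0)\in Q_{1/2}$ and sets $v(x,t)=u(x/2+x_0,t/2^\s+t_0)$; since $|x_0|<1/2$, the off-center ball $B_{\m^{-i}/2}(x_0)$ sits inside the origin-centered $B_{\m^{-i}}$, so the hypothesis on $u$ transfers to $v$ over origin-centered balls with no loss. The entire induction is then run on the concentric cylinders $Q_{\m^k}$ for $v$: every ball that arises when verifying the hypothesis of Lemma~\ref{lem:diminish_of_oscilation} for the rescaled function $w$ is already origin-centered, so there is no borderline case, no covering argument, and no constant to calibrate.

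Without this centering, your argument has two concrete gaps. First, the base case: your rescaling at level $i=0$ gives $v(x,t)=\tfrac18\,u(x+x_0,t+t_0)$, but the differential inequalities for $\d_\t u$ are assumed only in $Q_1$, and $Q_1+(x_0,t_0)\not\subset Q_1$ for $(x_0,t_0)\neq 0$, so Lemma~\ref{lem:diminish_of_oscilation} cannot be applied to this $v$. Second, in the fallback regime $j>i$, embedding $B_{\m^{i-j}}(x_0)$ into an origin-centered $B_{\m^{-\tilde j}}$ with \emph{integer} $\tilde j$ forces $\tilde j\ge j-i+1$ (once $\m\le 2/3$), so the corollary's hypothesis yields only $\m^{-\a(j-i+1)}$ and the product bound becomes $\m^{-\a}/8$, not $(3/2)^\a/8$. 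Since $\m$ is chosen small in Lemma~\ref{lem:diminish_of_oscilation}, this need not be $\le 1$; the figure $(3/2)^\a$ would require a continuous-in-radius version of the hypothesis that is not assumed. Both issues evaporate after the paper's half-scaling: it shrinks the domain so that translated unit cylinders stay inside $Q_1$, and it supplies exactly the radius factor $1/2$ needed to absorb the offset $|x_0|<1/2$.
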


\begin{proof}
Fix $(x_0,t_0) \in Q_{1/2}$ and let,
\[
v(x,t) := u(x/2+x_0,t/2^\s+t_0) \qquad\Rightarrow\qquad \d_\t v(x,t) = \d_{\t/2^\s}u(x/2+x_0,t/2^\s+t_0).
\]
Notice that $v \in C(\bar Q_2) \cap C((-2^\s,0]\to L^1_\s)$ satisfies the following inequalities in $Q_1$ for every $\t \in(0,1)$,
\begin{align*}
&(\d_\t v)_t - \cM^+ \d_\t v \leq \d(\t/2^\s)^{\gamma/\s}(1/2^\s) \leq \d\t^{\gamma/\s},\\
&(\d_\t v)_t - \cM^- \d_\t v \geq -\d(\t/2^\s)^{\gamma/\s}(1/2^\s) \geq -\d\t^{\gamma/\s}.
\end{align*}
Moreover, given that $(x_0,t_0) \in Q_{1/2}$ we get the inclusions $B_{\m^{-i}/2}(x_0) \ss B_{\m^{-i}}$ and $(t_0-1/2^\s,t_0] \ss (-1,0]$ such that,
\[
\sup_{\substack{\t\in(0,1)\\i\in\N_0}} \mu^{\a i}\left[\frac{\d_{\t} v}{\t^\b}\right]_{C^{0,\e}\1B_{\m^{-i}}\times(-1,0]\2} = \frac{1}{2^{\s\b+\e}}\sup_{\substack{\t\in(0,1/2^\s)\\i\in\N_0}} \mu^{\a i}\left[\frac{\d_{\t} u}{\t^\b}\right]_{C^{0,\e}\1B_{\m^{-i}/2(x_0)}\times(t_0-1/2^\s,t_0]\2} \leq 1.
\]
To prove the corollary it suffices to show that,
\begin{align*}
\sup_{\substack{i\in\N\\\t\in(0,2^\s\m^{\s i})}} \m^{-\a i}\left[\frac{\d_\t v}{\t^\b}\right]_{C^{0,\e}(Q_{\m^i})} \leq 1.
\end{align*}

We proceed by induction where the case $i=1$ is already established by Lemma \ref{lem:diminish_of_oscilation} taking $\m<1/2$ if necessary. Assume for some $i\in\N$ and $Q_{\m^i}(x_0,t_0)\ss Q_2$ the inductive hypothesis,
\[
\sup_{\substack{\t\in(0,\min(1,2^\s\m^{\s(i-j)}))\\j\in\N_0}} \m^{\a(j-i)}\left[\frac{\d_{\t} v}{\t^\b}\right]_{C^{0,\e}\1 Q_{\m^{i-j}}\2}\leq 1.
\]

Let
\begin{align*}
&w(x,t) := \frac{v(\m^i x,\m^{\s i}t)}{\m^{\s i(\b+\a/\s+\e/\s)}} \qquad\Rightarrow\qquad \d_\t w(x,t) = \frac{(\d_{\m^{\s i}\t} v)(\m^i x,\m^{\s i}t)}{\m^{\s i(\b+\a/\s+\e/\s)}}
\end{align*}
such that it satisfies the following inequalities in $Q_1$ for every $\t \in(0,1)$,
\begin{align*}
&(\d_\t w)_t - \cM^+ \d_\t w \leq \frac{\d(\m^{\s i}\t)^{\gamma/\s}}{\m^{\s i(\b+\a/\s+\e/\s)}}\m^{\s i} \leq \d,\\
&(\d_\t w)_t - \cM^- \d_\t w \geq -\frac{\d(\m^{\s i}\t)^{\gamma/\s}}{\m^{\s i(\b+\a/\s+\e/\s)}}\m^{\s i} \geq -\d,
\end{align*}
given that $\b+\a/\s+\e/\s<1+\gamma/\s$.

From the inductive hypothesis,
\[
\sup_{\substack{\t\in(0,1)\\j\in\N_0}} \m^{\a j}\left[\frac{\d_{\t} w}{\t^\b}\right]_{C^{0,\e}(Q_1)} \leq \sup_{\substack{\t\in(0,\min(1,2^\s\m^{\s(i-j)}))\\j\in\N_0}} \m^{\a(j-i)}\left[\frac{\d_{\t} v}{\t^\b}\right]_{C^{0,\e}\1 Q_{\m^{i-j}}\2} \leq 1.
\]
By applying Lemma \ref{lem:diminish_of_oscilation} to $w$ we now obtain,
\begin{align*}
1 &\geq \sup_{\t\in(0,1)} \m^{-\a i}\left[\frac{\d_\t w}{\t^\b}\right]_{C^{0,\e}(Q_\m)} = \sup_{\t\in(0,\m^{\s i})} \m^{-\a(i+1)}\left[\frac{\d_\t v}{\t^\b}\right]_{C^{0,\e}(Q_{\m^{i+1}})},\\
&\geq \sup_{\t\in(0,2^\s\m^{\s(i+1)})} \m^{-\a(i+1)}\left[\frac{\d_\t v}{\t^\b}\right]_{C^{0,\e}(Q_{\m^{i+1}})},
\end{align*}
which shows the desired inductive step and concludes the proof the corollary.
\end{proof}

The next corollary establishes an estimate over a higher order difference quotient by sacrificing a little bit of the $\e$ H\"older exponent.

\begin{corollary}[Improvement of the Difference Quotient]\label{cor:improvement_dif_quot}
Let $u \in \text{Class}( Q_2)$ satisfies the following inequalities in $Q_1$ for some $\d\geq0$, $\gamma\in[0,1]$ and every $\t \in(0,1)$,
\begin{align*}
(\d_\t u)_t - \cM^+ \d_\t u \leq \d\t^{\gamma/\s} \qquad\text{and}\qquad (\d_\t u)_t - \cM^- \d_\t u \geq -\d\t^{\gamma/\s}.
\end{align*}
Given $\b\in(0,1)$, $\a\in(0,\bar\a)$ and $\e\in(0,(\bar\a-\a))$ such that,
\[
1+\gamma/\s > \b+\a/\s+\e/\s,
\]
there exists constants $\m,\d_0\in(0,1)$ depending on $\a$ and $\e$, such that,
\[
\|u\|_{C^{0,\bar\a}(Q_1)} + \sup_{\substack{\t\in(0,1)\\i\in\N_0}} \m^{\a i}\left[\frac{\d_{\t} u}{\t^\b}\right]_{C^{0,\e}\1B_{\m^{-i}}\times(-1,0]\2}\leq 1 \qquad \text{ and }\qquad \d< \d_0,
\]
imply the following estimates:
\begin{itemize}
\item If $\b+\a/\s+\e/\s<1$ then,
\[
\sup_{\t\in(0,1/4^\s)}\left[\frac{\d_{\t} u}{\t^{\b+\a/\s}}\right]_{C^{0,\bar\a\e/4}\1 Q_{1/4}\2}\leq C,
\]
For some constant $C>0$ depending on $(1-(\b+\a/\s+\e/\s))$.
\item If $\b+\a/\s+\e/\s>1$ then,
\[
\|u_t\|_{C^{0,\s\b+\a+\e-\s}(Q_{1/4})} \leq C,
\]
For some constant depending $C>0$ on $((\b+\a/\s+\e/\s)-1)$.
\end{itemize}
\end{corollary}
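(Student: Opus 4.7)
Plan: I would start by invoking Corollary \ref{cor:iteration} to obtain the master estimate
\[
\left[\frac{\delta_\tau u}{\tau^\beta}\right]_{C^{0,\epsilon}(Q_{\mu^i}(x_0,t_0))} \leq 8\mu^{\alpha i}, \qquad Q_{\mu^i}(x_0,t_0) \subset Q_{1/2}, \; \tau \in (0,\mu^{\sigma i}),
\]
which is the only information coming from the equation; the rest of the argument is a careful rearrangement, and the two cases differ only in how this estimate is used.

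In the supercritical Case 2, where $\beta + \alpha/\sigma + \epsilon/\sigma > 1$, I would first establish pointwise existence of $u_t$. Applying the master estimate with the two points $(x_0,t_0)$ and $(x_0,t_0-\tau)$ at the minimal admissible scale $\mu^i \sim \tau^{1/\sigma}$ gives the second-order difference bound
\[
|u(x_0,t_0) - 2u(x_0,t_0-\tau) + u(x_0,t_0-2\tau)| \leq C\tau^{\beta + (\alpha+\epsilon)/\sigma},
\]
whose exponent is $1 + \theta$ for $\theta := (\sigma\beta+\alpha+\epsilon-\sigma)/\sigma > 0$. A Zygmund-style telescoping of $\{\delta_{2^{-n}}u(x_0,t_0)/2^{-n}\}$ then shows this sequence is Cauchy, identifies its limit as $u_t(x_0,t_0)$, and gives the quantitative rate $|u_t(x_0,t_0) - \delta_\tau u(x_0,t_0)/\tau| \leq C\tau^\theta$. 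To bound $u_t$ in $C^{0,\sigma\theta}$, I would fix two points at parabolic distance $r$, set $\tau = r^\sigma$, and decompose $u_t(x_0,t_0) - u_t(x_0',t_0')$ through the intermediaries $\delta_\tau u/\tau$ at each point: the two outer terms contribute $C\tau^\theta = Cr^{\sigma\theta}$ by the convergence rate, while the middle term, controlled by the master estimate at scale $\mu^i \sim r$, contributes $Cr^{\alpha+\epsilon}\tau^{\beta-1} = Cr^{\sigma\theta}$. All three exponents align.

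In the subcritical Case 1, where $\beta + \alpha/\sigma + \epsilon/\sigma < 1$, the target is a uniform-in-$\tau$ Hölder seminorm bound on $v_\tau := \delta_\tau u/\tau^{\beta+\alpha/\sigma}$. Fixing $\tau$ and two points at parabolic distance $r$, the master estimate at the optimal scale $\mu^i \sim \max(r,\tau^{1/\sigma})$ immediately yields $|v_\tau(x_0,t_0) - v_\tau(x_0',t_0')| \leq Cr^\epsilon$ when $\tau^{1/\sigma} \geq r$, but only $Cr^{\alpha+\epsilon}/\tau^{\alpha/\sigma}$ in the complementary regime, which is unfavorable for small $\tau$. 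To extract a uniform bound in this bad regime I would interpolate this fine estimate against an auxiliary estimate coming from $\|u\|_{C^{0,\bar\alpha}(Q_1)} \leq 1$ (either the spatial form $|\delta_\tau u(x,t)-\delta_\tau u(y,s)| \leq 2r^{\bar\alpha}$ or the temporal form $\leq 2\tau^{\bar\alpha/\sigma}$), choosing the interpolation weight precisely to cancel the offending $\tau^{-\alpha/\sigma}$; a short arithmetic using the subcritical constraint produces the exponent $\bar\alpha\epsilon/4$ with a constant that blows up as $\beta + \alpha/\sigma + \epsilon/\sigma \to 1^-$.

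The main obstacle is this interpolation step in Case 1: the raw iteration estimate has the correct $r$-dependence but an unfavorable negative power of $\tau$, and one must combine it with global regularity in just the right way to eliminate the $\tau$-singularity without destroying the $r$-rate. Case 2 is, by contrast, essentially forced by the second-difference bound together with a single triangle-inequality split around the canonical choice $\tau = r^\sigma$.
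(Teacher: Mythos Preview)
Your Case 2 argument is essentially correct and mirrors the paper's proof: the second-difference bound $|\delta^2_\tau u|\leq C\tau^{\beta+(\alpha+\epsilon)/\sigma}$ is the paper's \eqref{eq:13}, the Zygmund-type argument giving $|u_t-\delta_\tau u/\tau|\leq C\tau^\theta$ is Lemma~\ref{lem:appendix4}, and the triangle-inequality split through $\delta_{r^\sigma}u/r^\sigma$ all match exactly.

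Case 1, however, has a genuine gap. The interpolation you propose cannot eliminate the $\tau^{-\alpha/\sigma}$ singularity in general. In the regime $\tau<r^\sigma$ you have the fine estimate $|\Delta(\delta_\tau u)|\leq C\tau^\beta r^{\alpha+\epsilon}$ and the two coarse estimates $|\Delta(\delta_\tau u)|\leq 2r^{\bar\alpha}$ and $|\Delta(\delta_\tau u)|\leq 2\tau^{\bar\alpha/\sigma}$ coming from $\|u\|_{C^{0,\bar\alpha}}\leq 1$. After dividing by $\tau^{\beta+\alpha/\sigma}$, the spatial coarse bound carries an even worse negative $\tau$-exponent than the fine one, while the temporal coarse bound has nonnegative $\tau$-exponent only when $\bar\alpha\geq\sigma\beta+\alpha$. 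For $\beta$ close to $1$---precisely the regime needed in the proof of the main theorem---this fails, and no convex combination of the three estimates can produce a $\tau$-exponent at least $\beta+\alpha/\sigma$: when $\beta>\bar\alpha/\sigma$ the constraint is infeasible even for a three-way interpolation.

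What the paper does instead, and what your proposal is missing, is an intermediate step: it first upgrades the second-difference bound $|\delta^2_\tau u|\leq C\tau^{\beta+(\alpha+\epsilon)/\sigma}$ (which you already derive for Case 2, applied purely in time) to the first-difference bound $|\delta_\tau u|\leq C\tau^{\beta+(\alpha+\epsilon)/\sigma}$ via Lemma~5.6 of \cite{Caffarelli95}. This is the subcritical Zygmund direction, and it is exactly here that the hypothesis $\beta+\alpha/\sigma+\epsilon/\sigma<1$ is spent and where the blow-up as this quantity approaches $1$ enters. Once this bound is in hand, the temporal estimate becomes $|\Delta(\delta_\tau u)|\leq C\tau^{\beta+\alpha/\sigma+\epsilon/\sigma}$, which after division gives $|\Delta v_\tau|\leq C\tau^{\epsilon/\sigma}$ with no interpolation needed. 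The paper then splits at the threshold $\tau\sim r^{\bar\alpha/\sigma}$ rather than $r^\sigma$: below it use this new temporal bound, above it use the spatial $C^{0,\bar\alpha}$ bound, and both cases land on the exponent $\bar\alpha\epsilon/4$.
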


\begin{proof}
By Corollary \ref{cor:iteration} we know that there exists $C>0$ such that,
\begin{align*}
\sup_{\substack{Q_r(x,t)\ss Q_{1/2}\\\t\in(0,r^\s)}}\osc_{Q_r(x,t)}\frac{\d_\t u}{r^{\a+\e}\t^\b}\leq C.
\end{align*}
Let $x\in B_{1/2}$ and $v(t)=u(x,t)$. From the above estimate using $\t=r^\s$,
\begin{align}\label{eq:13}
\left|\frac{\d^2_\t v(t)}{\t^{\b+\a/\s+\e/\s}}\right| \leq \osc_{Q_{\t^{1/\s}}(x,t)} \frac{\d_\t u}{\t^{\b+\a/\s+\e/\s}} \leq C.
\end{align}

Let us consider the case $\b+\a/\s+\e/\s<1$. Here our goal is to get the estimate,
\[
\sup_{\substack{Q_r(x,t) \ss Q_{1/4}\\\t\in(0,1/4^\s)}} \osc_{Q_r(x,t)}\frac{\d_\t u}{r^{\bar\a\e/4}\t^{\b+\a/\s}} \leq C.
\]
At this point we notice that \eqref{eq:13} and $\osc_{(-1,0]}v \leq 1$ are the main hypothesis used in the proof of Lemma 5.6 from \cite{Caffarelli95} in order to obtain the following estimate,
\begin{align}\label{eq:14}
\sup_{(t-\t,t]\ss(-1/2^\s,0]}\frac{\d_\t v(t)}{\t^{\b+\a/\s+\e/\s}} \leq C \qquad\underset{\text{triangle inequality}}{\Rightarrow}\qquad \sup_{\t\in(0,4^{-\s})}\osc_{Q_{1/4}} \frac{\d_\t u}{\t^{\b+\a/\s+\e/\s}} \leq C.
\end{align}
We now fix $Q_r(x,t)\ss Q_{1/4}$ and consider two cases. If $\t\in(0,r^{\bar\a/\s})$ then from the previous estimate,
\begin{align*}
\osc_{Q_r(x,t)}\frac{\d_\t u}{r^{\bar\a\e/4}\t^{\b+\a/\s}} \leq C\frac{\t^{\e/\s}}{r^{\bar\a\e/4}}\leq C.
\end{align*}
If $\t\in[r^{\bar\a/\s},1/4^{\s})$ then we use $\|u\|_{C^{0,\bar\a}(Q_1)}\leq 1$ and the fact that $1>\b+\a/\s+\e/\s$,
\begin{align*}
\osc_{Q_r(x,t)}\frac{\d_\t u}{r^{\bar\a\e/4}\t^{\b+\a/\s}} \leq 2\frac{r^{\bar\a(1-\e/4)}}{\t^{\b+\a/\s}} \leq 2.
\end{align*}
This concludes the estimate in the case $\b+\a/\s+\e/\s<1$,

Let us consider now the case $\b+\a/\s+\e/\s>1$. From \eqref{eq:13} and using Lemma \ref{lem:appendix4} instead we get the bounds
\begin{equation}\label{eq:15}
 \sup_{Q_{1/2}}|u_t| +  \sup_{\substack{(x,t)\in Q_{1/2}\\\t\in(0,1/2^\s)}} \left| \frac{\d_\t u_t(x,t)}{\t^{\b+\a/\s+\e/\s-1}}   \right| \leq C.
\end{equation}
At this moment all we have to show is that given $(y,s)\in Q_r(x,t)\ss Q_{1/4}$,
\begin{align}\label{eq:7}
 \left| u_t(x,t) - u_t(y,s)\right| \leq Cr^{\s\b+\a+\e-\s}.
\end{align}
Let $\t = r^\s$ such that from \eqref{eq:15} we obtain,
\[
 \left| \frac{\d_{r^\s}u(x,t)}{r^\s} - \frac{\d_{r^\s}u(y,s)}{r^\s}\right| \leq Cr^{\s\b+\a+\e-\s}.
\]
On the other hand, using \eqref{eq:15} once again,
\[
\left|\frac{\d_{r^{\s}}u}{r^\s}-u_t\right|(y,s) \leq \frac{1}{r^\s}\int_{-r^\s}^0|u_t(y,s+a)-u_t(y,s)|da\leq Cr^{\s\b+\a+\e-\s}.
\]
A similar bound also occurs if we replace $(y,s)$ by $(x,t)$. Finally the desired estimate results from the triangle inequality by adding and subtracting $\1\frac{\d_{r^\s}u(x,t)}{r^\s} - \frac{\d_{r^\s}u(y,s)}{r^\s}\2$ inside the absolute value in \eqref{eq:7}.
\end{proof}

At this point we can give the proof of Theorem \ref{thm:intro} in the case of bounded right-hand side and integrable tails.

\begin{corollary}[Bounded Right-Hand Side]\label{cor:bdd_rhs}
Let $I$ be uniformly elliptic with $I(x,0) = 0$ and $u \in \text{Class}( Q_2)$ satisfies,
\begin{align}\label{eq:16}
u_t - Iu &= f(x,t) \text{ in } Q_2.
\end{align}
Given $\b\in(0,1)$ there exists $\e\in(0,1-\b)$ such that,
\[
\sup_{\t\in(0,r_0^\s)} \left[\frac{\d_\t u}{\t^\b}\right]_{C^\e\1Q_{r_0}\2} \leq C\1\sup_{t\in(-2^\s,0]}\|u(t)\|_{L^1_\s}+\sup_{Q_2}|f|\2.
\]
For some $r_0\in(0,1)$ universal and $C>0$ depending on $(1-\b)$. 
\end{corollary}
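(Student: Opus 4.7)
The strategy is to bootstrap via Corollary \ref{cor:improvement_dif_quot} in its first case ($\b+\a/\s+\e/\s<1$) with $\g=0$, since the right-hand side is only bounded. Each application of that corollary raises the exponent $\b$ in $\d_\t u/\t^\b$ by $\a/\s$, shrinks the H\"older exponent to $\bar\a\e/4$, and passes the estimate from $Q_1$ to $Q_{1/4}$. To reach a target $\b\in(0,1)$ we iterate $N\sim\s\b/\bar\a$ times, so both $N$ and the resulting constant $C$ depend on $(1-\b)$.

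I would first normalize and truncate. After dividing by a universal multiple of $A:=\sup_t\|u(t)\|_{L^1_\s}+\sup_{Q_2}|f|$, we may assume $A\leq\d_0\e_0$ for small universal constants $\d_0,\e_0$ to be fixed. Choose $\eta\in C^\8_c(B_2)$ with $\eta\equiv 1$ on $B_{3/2}$, and set $\tilde u:=\eta u$. Then $\tilde u$ is bounded and compactly supported, coincides with $u$ on $Q_{3/2}$, and there satisfies $\tilde u_t-I\tilde u=f+E$ with $|E|\leq C\sup_t\|u(t)\|_{L^1_\s}$: since $u-\tilde u$ vanishes on $B_{3/2}$, uniform ellipticity reduces $Iu-I\tilde u$ to a kernel integral over $\R^n\sm B_{3/2}$ that is controlled by $\L\|u\|_{L^1_\s}$. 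The key benefit of this truncation is that the tail semi-norm $\sup_i\m^{\a i}[\d_\t\tilde u/\t^\b]_{C^{0,\e}(B_{\m^{-i}}\times(-1,0])}$ appearing in Corollary \ref{cor:improvement_dif_quot} becomes bounded uniformly in $i$, because $\tilde u$ is bounded and compactly supported (contributions from pairs outside $B_2$ vanish, and cross-support pairs are tamed by the Lipschitz decay of $\eta$ near $\p B_2$).

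Next, I would set up the base step. Theorem \ref{thm:KS} applied to $\tilde u$ gives $\|\tilde u\|_{C^{0,\bar\a}(Q_1)}\leq C\sup|f+E|\leq 1$ after the initial normalization. Uniform ellipticity gives for every $\t\in(0,1)$ that $(\d_\t\tilde u)_t-\cM^+\d_\t\tilde u\leq\d_0$ and $(\d_\t\tilde u)_t-\cM^-\d_\t\tilde u\geq-\d_0$, placing us in the setting of Corollary \ref{cor:improvement_dif_quot} with $\g=0$ and $\d<\d_0$. Given the target $\b\in(0,1)$, fix $\a=\bar\a/2$, $N=\lceil\s\b/\a\rceil$, and $\e_0>0$ small enough that $\b_k+\a/\s+\e_k/\s<1$ holds with $\b_k=k\a/\s$ and $\e_k=(\bar\a/4)^k\e_0$ for all $k=0,\dots,N-1$ (which forces $\e_0<\s(1-\b)$). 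Starting from $\b_0=0$, apply the corollary once to obtain $[\d_\t\tilde u/\t^{\b_1}]_{C^{0,\e_1}(Q_{1/4})}\leq C$, then rescale by factor $4$ in space and $4^\s$ in time to bring the estimate back to $Q_1$ and re-apply. After $N$ iterations we reach $[\d_\t\tilde u/\t^{\b_N}]_{C^{0,\e_N}(Q_{r_0})}\leq C$ with $r_0=4^{-N}$, $\b_N\geq\b$, and $\e_N=(\bar\a/4)^N\e_0>0$. Since $\tilde u\equiv u$ on $Q_{3/2}\supset Q_{r_0}$, undoing the normalizations yields the claimed inequality.

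The principal obstacle will be the bookkeeping between iterations, namely re-verifying the normalization $\|\cdot\|_{C^{0,\bar\a}(Q_1)}+(\text{tail})\leq 1$ for the rescaled function at each step. Truncation handles the tail once and for all (it is preserved under rescaling since the rescaled function remains bounded and compactly supported), while Theorem \ref{thm:KS} applied to the rescaled equation restores the $C^{0,\bar\a}$ part. The rescaling constants must then be tracked carefully so that their cumulative effect over the $N$ iterations is absorbed into a final $C$ depending only on $(1-\b)$ and the operator data.
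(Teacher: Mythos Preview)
Your high-level strategy---bootstrap via Corollary~\ref{cor:improvement_dif_quot} with $\g=0$, raising $\b$ by $\a/\s$ at each step---is precisely the paper's. The real gap is your claim that a single truncation ``handles the tail once and for all.''

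The tail hypothesis in Corollary~\ref{cor:improvement_dif_quot} is a bound on $[\d_\t w/\t^{\b_k}]_{C^{0,\e_k}(B_{\m^{-i}}\times(-1,0])}$, uniformly in $\t$ and $i$. Being bounded and compactly supported says nothing about this quantity when $\b_k>0$: it only gives $|\d_\t w/\t^{\b_k}|\leq 2\|w\|_\infty\t^{-\b_k}$, which blows up as $\t\to0$. After your first application, the output $[\d_\t\tilde u/\t^{\b_1}]_{C^{0,\e_1}}\leq C$ holds only on $Q_{1/4}$; when you rescale by factor $4$ to return to $Q_1$, the rescaled function is supported in $B_8$, and on the region corresponding to $B_2\sm B_{1/4}$ you have no control of $\d_\t\tilde u/\t^{\b_1}$ beyond what boundedness provides. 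The tail hypothesis therefore cannot be verified for the second application, and the iteration stalls.

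The paper's remedy is to re-truncate at \emph{every} step: set $v_k(x,t)=\eta(x)\,v_{k-1}(x/16,t/16^\s)$, so that $v_k$ only samples $v_{k-1}$ on $B_{1/4}$, exactly where the previous step's improved estimate is available. The cutoff $\eta$ then transfers that interior estimate into a global tail bound for $v_k$, re-establishing the hypothesis with a constant $C_{2,k}$ that grows in a controlled way with $k$. The paper also organizes the base case differently, taking $\b_0\in(0,\bar\a/4)$ and proving the initial tail bound via a direct two-case estimate ($\t<r^\s$ versus $\t\ge r^\s$) using only the interior $C^{0,\bar\a}$ bound from Krylov--Safonov; this avoids needing H\"older control of $\tilde u$ quantitatively all the way out to the support boundary of $\eta$, which your $\b_0=0$ approach implicitly requires but your normalization does not provide.
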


\begin{proof}
Assume without loss of generality,
\begin{align}\label{eq:17}
\sup_{t\in(-2^\s,0]}\|u(t)\|_{L^1_\s}+\sup_{Q_2}|f| \leq 1.
\end{align}

Let
\begin{align*}
&\b_0 \in(0,\bar\a/4),\quad \a:=\bar\a/2, \quad \b_k := \b_0 + k\a/\s, \quad r_k := 1/16^{k+1}.
\end{align*}
Our goal is to prove that as long as $\b_N<1$,
\begin{align}\label{eq:18}
\sup_{\t\in (0,r_N^\s)} \left[\frac{\d_\t u}{\t^{\b_N}}\right]_{C^{0,\e_N}\1Q_{r_N}\2} \leq C_N,
\end{align}
where
\[
\e_N := \min((1-\b_N)/4,\bar\a/4)(\bar\a/4)^N.
\]
Then the result follows by a standard covering argument both for the domain of the equation and the interval of H\"older exponents. Notice also that $\b_N<1$ implies $N<4/\bar\a$ such that any dependence on $N$ is actually universal. On the other hand, the dependence on $\e_N$ degenerates as $\b_N$ approaches 1.

The idea is to iterate Corollary \ref{cor:improvement_dif_quot} using $\gamma=0$. In order to do this we consider at each step the following truncation starting with $v_0 = u$,
\[
v_k(x,t) := \eta(x) v_{k-1}(x/16,t/16^\s),
\]
where $\eta(x) \in [0,1]$ is a smooth function supported in $B_4$ and equal to one in $B_2$. Our goal is to establish the following inductive steps:
\begin{enumerate}[label=(IH\arabic*)]
\item\label{1} For $k\in\N_0$ and every $\t\in(0,1)$ the following inequalities get satisfied in $Q_1$,
\begin{align*}
(\d_\t v_k)_t - \cM^+ \d_\t v_k \leq -C_{1,k} \qquad\text{and}\qquad (\d_\t v_{k})_t - \cM^- \d_\t v_{k} \geq -C_{1,k}.
\end{align*}
\item\label{2} For $k=1,2,\ldots,(N+1)$,
\[
\sup_{\substack{\t\in(0,1)\\i\in\N_0}}\m^{\a i} \left[\frac{\d_\t v_k}{\t^{\b_{k-1}}}\right]_{C^{0,\e_{k-1,N}}\1B_{\m^{-i}}\times(-1,0]\2} \leq C_{2,k} \]
where
\[
\e_{k,N} := \min((1-\b_N)/4,\bar\a/4)(\bar\a/4)^k.
\]
\end{enumerate}

The hypothesis \ref{1} is satisfied in the case $k=0$ with $C_{1,0} = 2$ from \eqref{eq:16}, the translation invariance of $I$ and the bound for $f$ provided by \eqref{eq:17}. Assuming that \ref{1} holds for an arbitrary $k\in\N_0$ we get that for every $\t\in(0,1)$ the following inequality gets satisfied by $\d_\t v_{k+1}$ in $Q_1$,
\begin{align*}
\underbrace{(\d_\t v_{k+1})_t}_{\substack{= (1/16^\s)(\d_{\t/16^\s} v_k)_t\\ \Leftarrow \eta=1}} -  \cM^+ \d_\t v_{k+1} &\leq (1/16^\s)\1\1\d_{\t/16^\s} v_k\2_t- \cM^+ \d_{\t/16^\s} v_k + \cM^+(\underbrace{(1-\eta) \d_{\t/16^\s} v_k}_{=0 \text{ in } B_1})\2,\\
&\leq C_{1,k} + C\sup_{t\in(-2^\s,0]}\|v_{k}(t)\|_{L^1_\s}.
\end{align*}
If $k=0$, then we use 
\[
\sup_{t\in(-2^\s,0]}\|v_{0}(t)\|_{L^1_\s} = \sup_{t\in(-2^\s,0]}\|u(t)\|_{L^1_\s} \leq 1.
\]
Otherwise, if $k\geq1$ then we use that,
\[
\sup_{t\in(-2^\s,0]}\|v_{k}(t)\|_{L^1_\s} \leq C\|u\|_{L^\8\1Q_{2/16^{k}}\2} \leq C\sup_{t\in(-2^\s,0]}\|u(t)\|_{L^1_\s} \leq C.
\]
In any case, and using a similar argument for the super solution inequality, we get that \ref{1} holds for $k+1$ with $C_{1,k+1} = C_{1,k}+C$.

To obtain \ref{2} for $k=1$ we proceed as in the proof of Corollary \ref{cor:improvement_dif_quot} by considering two cases. If $\t\in(0,r^\s)$ then we bound the oscillation in the time variable in terms of $\t$ by the Krylov-Safonov estimate and then use the triangle inequality to obtain,
\begin{align*}
\sup_{Q_r(x,t)\ss Q_{1/2}} \osc_{Q_r(x,t)} \frac{\d_\t u}{r^{\e_{0,N}} \t^{\b_0}} \leq C\frac{\t^{\bar\a/2-\b_0}}{r^{\e_{0,N}}} \leq C.
\end{align*}
If $\t\in[r^\s,1/2^{\s})$ then we use instead that $\osc_{Q_r(x,t)} \d_\t u \leq \osc_{Q_r(x,t)} u + \osc_{Q_r(x,t-\t)} u$, for which each term gets controlled in terms of $r$, once again using the Krylov-Safonov estimate,
\begin{align*}
\sup_{Q_r(x,t)\ss Q_{1/2}} \osc_{Q_r(x,t)} \frac{\d_\t u}{r^{\e_{0,N}} \t^{\b_0}} \leq C\frac{r^{\bar\a-\e_{0,N}}}{\t^{\b_0}} \leq C.
\end{align*}
Then we get the following estimate for $u$,
\[
\sup_{\t \in (0,1/2^\s)} \left[\frac{\d_\t u}{\t^{\b_0}}\right]_{C^{0,\e_{0,N}}(Q_{1/2})} \leq C.
\]
This establishes \ref{2} for $v_1$ after considering the scaling and the truncation given by $\eta$.

At this point we assume the inductive hypotheses \ref{1} and \ref{2} for some $k\in\{1,2,\ldots,N\}$ and see how to obtain \ref{2} for $k+1$. By the Krylov-Safonov Theorem \ref{thm:KS} and \eqref{eq:17},
\[
\|v_k\|_{C^{0,\bar\a}(Q_1)} \leq \|u\|_{C^{0,\bar\a}(Q_1)} \leq C.
\]
The hypothesis of Corollary \ref{cor:iteration} with $\gamma=0$ now apply to the following function,
\[
\tilde v_k = \frac{v_k}{\d_0^{-1}C_{1,k}+C_{2,k}+C}.
\]
Then, as long as $k\leq N$, such that $1>\b_N$ implies $1>\b_{k-1}+\a/\s + \e_{k-1,N}/\s$, we get that,
\[
\sup_{\t \in (0,1/4^\s)}\left[\frac{\d_\t\tilde v_k}{\t^{\b_{k}}}\right]_{C^{0,\e_{k,N}}} \leq C \qquad\Rightarrow\qquad \sup_{\t \in (0,1/4^\s)}\left[\frac{\d_\t v_k}{\t^{\b_{k}}}\right]_{C^{0,\e_{k,N}}} \leq C(\d_0^{-1}C_{1,k}+C_{2,k}+1).
\]
This establishes \ref{2} for $v_{k+1}$ with $C_{2,k+1}=C(\d_0^{-1}C_{1,k}+C_{2,k}+1)$.

The final iteration in this inductive argument establishes the desired estimate \eqref{eq:18} for $u$ and concludes the proof of the Corollary.
\end{proof}

To conclude the proof of Theorem \ref{thm:main} we just need to iterate the procedure one more time starting with a H\"older exponent sufficiently close to one. This is possible because of the H\"older hypotheses.

\begin{proof}[Proof of Theorem \ref{thm:main}]
Let us assume without loss of generality that,
\[
\sup_{t\in(-2^\s,0]}\|u(t)\|_{L^1_\s} + \sup_{x\in B_2}\|f(x,\cdot)\|_{C^{0,\gamma/\s}_*(-2^\s,0]} + [g]_{C^{0,\gamma/\sigma}_{\sigma,2}(-2^\s,0]} \leq 1.
\]

By Corollary \ref{cor:bdd_rhs} we know that for $\b := 1-\frac{\bar\a-\gamma}{4\s}$ there exists some $\e\in(0,1-\b)$ such that,
\[
\sup_{\t\in(0,1/2^\s)} \left[\frac{\d_\t u}{\t^\b}\right]_{C^{0,\e}(Q_{1/2})} \leq C.
\]

Same as in the proof of Corollary \ref{cor:bdd_rhs}, we consider the truncation,
\[
v(x,t) :=\eta(x)u(x/16,t/16^\s).
\]
By using the H\"older hypothesis for the right-hand we get that the following inequality is satisfied in $Q_1$ for every $\t\in(0,1)$,
\begin{align}\label{eq:11}
(\d_\t v)_t - \cM^+ \d_\t v \leq C\t^{\gamma/\s} + (1/16^\s)\cM^+((1-\eta)\d_{\t/16^\s} u).
\end{align}
Now we split the second term in the following way using $\tilde \t = \t/16^\s$ and $\tilde t = t/16^\s$,
\begin{align}\label{eq:10}
\cM^+((1-\eta)\d_{\tilde \t} u) &\leq C\1\|\d_{\tilde \t} u\1\tilde t\2\|_{L^1(B_2)} + \|\d_{\tilde \t} g\chi_{\R^n\sm B_2}\|_{L^1_\s}\2,\\
\nonumber
&\leq C\1\|\d_{\tilde \t} u\1\tilde t\2\|_{L^1(B_2)} + \t^{\gamma/\s}\2.
\end{align}
In order to control $\|\d_{\tilde\t} u\1\tilde t\2\|_{L^1(B_2)}$ we consider $Q_{2d}(x_0,t_0) \ss Q_2$ and $v(x,t) := u(dx+x_0,d^\s t+t_0)$. By applying Corollary \ref{cor:bdd_rhs} to $v$ with $\b = \gamma/\s$ we obtain that,
\begin{align}
\nonumber
\frac{\d_\t^2 u}{\t^{\gamma/\s}}(x_0,t_0) \leq Cd^{-\gamma/\s} &\quad \ \Rightarrow\qquad \sup_{(t-2\t]\ss(-1/16^\s,0]}\left\|\frac{\d_\t^2 u(t)}{\t^{\gamma/\s}}\right\|_{L^1(B_2)} \leq \frac{C}{\bar\a-\gamma},\\
\label{eq:9}
&\overset{\text{Lemma \ref{lem:appendix5}}}{\Rightarrow}\quad \sup_{(t-2\t]\ss(-1,0]}\left\|\d_\t u(t)\right\|_{L^1(B_2)} \leq \frac{C \t^{\gamma/\s}}{\bar\a-\gamma}.
\end{align}
Putting \eqref{eq:11}, \eqref{eq:10} and \eqref{eq:9} and we get,
\begin{align}\label{eq:12}
(\d_\t v)_t - \cM^+ \d_\t v \leq C\t^{\gamma/\s} \qquad\text{and}\qquad (\d_\t v)_t - \cM^- \d_\t v \geq -C\t^{\gamma/\s},
\end{align}
where the super solution inequality follows by a similar argument.

Finally, by applying Corollary \ref{cor:iteration} to $v$ in the case $\b+\a/\s+\e/\s>1$ we obtain the desired estimate.
\end{proof}

\section{Applications}\label{sec:applications}

Theorem \ref{thm:intro} may be applied to viscosity solutions of non-local fully non-linear equations. Indeed, combined with the approximation procedure sketched in Proposition \ref{prop:apriori} in the appendix it always guarantees the existence of a viscosity solution $u$ to a Dirichlet problem which satisfies the conclusion of the theorem. This is especially useful if the solutions are known to be unique; this is true whenever $I$ is independent of $x$ (see \cite{MR2494809}), or when there is at least one classical solution. The recent work \cite{Mou2015} gives some additional situations in which uniqueness is known in the elliptic case, and similar results should hold in the parabolic setting. 

The following is an application to the parabolic Evans-Krylov theorem. We note that the result below is equally 
We note that the assumptions on the regularity of $f$, $I$, and the boundary data may not be greatly relaxed unless further restrictions are placed on the class of kernels.

\begin{theorem}Let $\s \in (1,2)$, $u$ be a classical solution of $u_t - Iu = f$ in $Q_2$. Assume that $I$ is a concave uniformly elliptic operator of the form
\[
 Iu(x)= \inf_{\a\in \mathcal{A}} (2-\s)\int \frac{\d u(x;y)K_\a(x,y)dy}{|y|^{n+\s}},
\]
where $K_\a(x,y)=K_\a(x,-y)$. Assume that for some $\a<\bar \a$ with $\s+\a\neq 2$, we have
 \[
 \int_{B_{2r}\sm B_r}|K(x,y)-K(x',y)|dy\leq r^n |x-x'|^\a
 \]
for each $r$,
\[
 \sup_{t\in(-2^{\s},0]}\|u(\cdot,t)\|_{C^{0,\a}_*(\R^n)} + \|u\|_{C^{0,\a/\s}_{\s,2}}\leq 1,
\]
and
\[
 \|f\|_{C^{0,\a}(Q_2)}\leq 1.
\]
Then $u$ admits the estimate
\begin{equation}\label{eq:EKc}
 \|u_t\|_{C^{0,\a}(Q_1)} + \|(-\D)^{\s/2}u\|_{C^{0,\a}(Q_1)}\leq C.
\end{equation}
\end{theorem}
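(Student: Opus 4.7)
The plan is to combine the main time regularity result of this paper, Theorem \ref{thm:intro}, with the elliptic Evans--Krylov estimate for concave operators with H\"older continuous kernels from \cite{jinnonlinear, serra2014c}, by viewing the parabolic equation as an elliptic equation at each frozen time whose right hand side $u_t - f$ is H\"older continuous.

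First, apply Theorem \ref{thm:intro} with $\gamma = \alpha$: the assumption $f \in C^{0,\alpha}(Q_2)$ yields the required $C^{0,\alpha/\sigma}_*$ time regularity of $f$, and $\|u\|_{C^{0,\alpha/\sigma}_{\sigma,2}} \leq 1$ is exactly the semi-norm on complementary data used by the theorem. This produces $\|u_t\|_{C^{0,\alpha}(Q_{1/2})} \leq C$, which is the first half of \eqref{eq:EKc} and shows that $h := u_t - f \in C^{0,\alpha}(Q_{1/2})$. Next, freeze $t$ and view $u(\cdot,t)$ as a solution of $Iu(\cdot,t) = h(\cdot,t)$; the concavity of $I$, the symmetry and H\"older continuity in $x$ of the kernels, and the uniform tail bound from $\sup_t \|u(\cdot,t)\|_{C^{0,\alpha}_*(\R^n)} \leq 1$ match the hypotheses of the elliptic Evans--Krylov results of \cite{jinnonlinear, serra2014c}. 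They yield $\|u(\cdot,t)\|_{C^{\sigma+\alpha}_*(B_{1/4})} \leq C$ uniformly in $t$; the hypothesis $\sigma + \alpha \neq 2$ appears precisely to keep this borrowed statement within its admissible regularity range. Since $(-\Delta)^{\sigma/2}$ is a linear operator of order $\sigma$, this already furnishes the spatial part of \eqref{eq:EKc}.

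The last step promotes the spatial estimate to a parabolic one. For $\phi(x) := u(x,t) - u(x,s)$, integrating the bound $u_t \in C^{0,\alpha}(Q_{1/2})$ in time gives $\|\phi\|_{L^\infty(B_{1/2})} + [\phi]_{C^{0,\alpha}_*(B_{1/2})} \leq C|t-s|$, while Step 2 gives $[\phi]_{C^{\sigma+\alpha}_*(B_{1/4})} \leq C$; further, interior Krylov--Safonov (Theorem \ref{thm:KS}) applied in $Q_2$ yields $\|\phi\|_{L^\infty(B_{15/8})} \leq C|t-s|^{\bar\alpha/\sigma} \leq C|t-s|^{\alpha/\sigma}$ on the intermediate annulus. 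For $x \in B_{1/8}$, split the symmetric singular integral defining $(-\Delta)^{\sigma/2} \phi(x)$ into three radial zones $\{|y| < r_0\}$, $\{r_0 < |y| < 1/4\}$, and $\{|y| > 1/4\}$, bounded respectively by the three local norms above together with the tail semi-norm $\|u\|_{C^{0,\alpha/\sigma}_{\sigma,2}} \leq 1$ for the truly far contribution. Optimizing $r_0 \sim |t-s|^{1/\sigma}$ balances the first two contributions at $C|t-s|^{\alpha/\sigma}$, and the outer contribution is also $O(|t-s|^{\alpha/\sigma})$ by construction, leading to $|(-\Delta)^{\sigma/2}u(x,t) - (-\Delta)^{\sigma/2}u(x,s)| \leq C|t-s|^{\alpha/\sigma}$ and the full parabolic bound. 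The main technical work is this radial balancing; in particular, fitting the intermediate-zone pointwise bound from Krylov--Safonov together with the local $C^{\sigma+\alpha}_*$ regularity at matching scales is what turns the three ingredients into the final $|t-s|^{\alpha/\sigma}$ rate.
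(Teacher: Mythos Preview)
Your overall route is the same as the paper's: apply Theorem~\ref{thm:main} for $u_t\in C^{0,\a}$, freeze $t$ and invoke Serra's elliptic Evans--Krylov for the spatial $C^{\s+\a}$ bound, then establish the time regularity of $(-\D)^{\s/2}u$ by a radial splitting of the singular integral for $\phi:=u(\cdot,t)-u(\cdot,s)$. The gap is in the inner zone $\{|y|<r_0\}$ when $\s+\a>2$. The only control you invoke there is $[\phi]_{C^{\s+\a}_*(B_{1/4})}\leq C$, but for $\s+\a>2$ this is the seminorm of $D^2\phi$, and the sharpest bound on the symmetric second difference it yields is $\|D^2\phi\|_{L^\8}\,|y|^2\leq C|y|^2$ with no $|t-s|$ factor (Step~2 bounds $\|D^2 u(\cdot,t)\|_{L^\8}$ at each fixed $t$, so only $\|D^2\phi\|_{L^\8}\leq 2C$). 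Integrating, the inner zone contributes $Cr_0^{2-\s}$, which at $r_0=|t-s|^{1/\s}$ is $C|t-s|^{(2-\s)/\s}$; since $\a>2-\s$ in this regime, this is strictly larger than the target $|t-s|^{\a/\s}$ and your balancing fails.

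The paper handles the case $\s+\a>2$ by inserting an auxiliary step: it first proves $\|\D u\|_{C^{0,\s+\a-2}(Q_{4/3})}\leq C$ via its own balancing argument (matching the spatial $C^{\s+\a}$ bound against $|\d_\t^2 u|\leq C\t^{1+\a/\s}$ at scale $h=\t^{1/\s}$). This supplies $|\D\phi|\leq C|t-s|^{(\s+\a-2)/\s}$, and after subtracting the quadratic Taylor part the spherically-averaged inner zone contributes $C|t-s|^{(\s+\a-2)/\s}r_0^{2-\s}+Cr_0^{\a}$, which at $r_0=|t-s|^{1/\s}$ is $C|t-s|^{\a/\s}$. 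A one-line alternative within your framework is to interpolate between $\|\phi\|_{C^{0,\a}_*}\leq C|t-s|$ and $\|\phi\|_{C^{\s+\a}_*}\leq C$ to obtain $\|D^2\phi\|_{L^\8}\leq C|t-s|^{(\s+\a-2)/\s}$ directly. This case split is the real reason the hypothesis $\s+\a\neq 2$ appears---not only to keep Serra's theorem in range as you suggest, but because the time estimate on $(-\D)^{\s/2}u$ itself needs different handling on the two sides.
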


\begin{proof} Applying Theorem \ref{thm:main} to $u$, we have immediately that
 \[
  \|u_t\|_{C^{0,\a}(Q_{5/3})}\leq C.
 \]
 Fixing a time $t\in (-(5/3)^{\s},0]$, we may rewrite the equation as
 \[
   Iu(x)= u_t(x) -f(x) \qquad  x\in B_{5/3},
 \]
with the right-hand side bounded in $C^{0,\a}_*(B_{5/3})$. Applying the theorem of J. Serra \cite{serra2014c}, we obtain the estimate
\begin{equation}\label{eq:EKi1}
 \|u(\cdot,t)\|_{C^{0,\s-1+\a}_*(B_{3/2})}\leq C,
\end{equation}
and this is valid for each $t\in (-(5/3)^\s,0]$. It therefore suffices to show that
\begin{equation}\label{eq:EKi3}
 |(-\D)^{\s/ 2}\d_\t u(x,t)|\leq C\t^{\a/\s}
\end{equation}
for each $t\in (-1,0]$, $x\in B_1$, and $\t<\frac{1}{10}$. We will show instead that
\[
 |(-\D)^{\s / 2}\d^2_\t u(x,t)|\leq C\t^{\a/\s},
\] 
which implies \eqref{eq:EKi3} after applying the proof of Lemma 5.6 in \cite{Caffarelli95}. Below, we will abuse notation by writing 
 \[
  (-\D)^{\s/2} v := (2-\s) \int \frac{\d u(x;y)dy}{|y|^{n+\s}}, 
 \]
as, properly speaking, the fractional Laplacian should have a normalization constant $c(\s,n)$. As $c(\s,n)$ is comparable to $2-\s$ in the range $\s\in (1,2)$, bounding this operator is equivalent.

Assume $\s+\a<2$. First, set
\[
 (-\D)_h^{\s/2}v(x) = (2-\s)\int_{|y|>h}\frac{v(x+y)+v(x-y)-2v(x)}{|y|^{n+\s}}dy. 
\]
Then for any $(x,t)\in Q_{3/2}$, we have that from \eqref{eq:EKi1},
\begin{align}
\nonumber |(-\D)_h^{\s/2}u(x,t)-(-\D)^{\s/2}u(x,t)| &\leq c_\s\sup_s \int_{|y|<h}\frac{|u(x+y,s)+u(x-y,s)-2u(x,s)|}{|y|^{n+\s}}dy,\\
\label{eq:EKi2} &\leq Cc_\s h^\a
\end{align}
for $h<\frac{1}{10}$. This allows us to estimate
\[
 |(-\D)^{\s/2}\d^2_\t u(x,t)|\leq |(-\D)_h^{\s/2}\d^2_\t u(x,t)| + C(2-\s) h^\a.
\]

We estimate the first term further by breaking it into two pieces, and using the H\"older assumption on $u$ (as in \eqref{eq:9}) on the outer one.
\begin{align*}
 |(-\D)_h^{\s/2}\d^2_\t u(x,t)|&\leq C(2-\s)\1 \int_{|y|>\frac{1}{10}}\frac{\d^2_\t u(x+y)+\d^2_\t u(x-y)-2\d^2_\t u(x)}{|y|^{n+\s}}dy+\int_{h<|y|<\frac{1}{10}}\2 \\
 &\leq C(2-\s)\1 \t^{\a/\s} + \int_{h<|y|<\frac{1}{10}}\frac{\d^2_\t u(x+y)+\d^2_\t u(x-y)-2\d^2_\t u(x)}{|y|^{n+\s}}dy\2.
\end{align*}
In the remaining integral term, the numerator is bounded by $C\t^{1+\a/\s}$ from our estimate on $u_t$. This gives
\[
 \int_{h<|y|<\frac{1}{10}}\frac{|\d^2_\t u(x+y)+\d^2_\t u(x-y)-2\d^2_\t u(x)|}{|y|^{n+\s}}dy\leq C(2-\s)\t^{1+\a/\s}h^{-\s},
\]
from computing the integral of the kernel. After setting $h=\t^{1/\s}$ and putting everything together, we obtain
\[
  |(-\D)^{\s/2}\d^2_\t u(x,t)|\leq C\t^{\a/\s},
\]
which implies the conclusion. Note that we used that $2-\s\leq 2$ in each term to remove any dependence on $\s$.

Now assume that $\s+\a>2$. In this case, we first show an estimate on the Laplacian of $u$. We claim that
\begin{equation}\label{eq:EKi4}
 \|\D u\|_{C^{0,\s+\a-2}(Q_{4/3})}\leq C.
\end{equation}
Again, the estimate in space follows directly from \eqref{eq:EKi1}, so it is enough to show that
\[
 |\d_\t^2 \D u|\leq C\t^{\frac{\s+\a-2}{\s}}.
\]
The space estimate implies that
\[
 \left|\D u(x)- \frac{c(n)}{h^2}\fint_{B_h}u(x,t)- u(x+y,t)dy\right|\leq C|h|^{\s+\a-2}.
\]
On the other hand, from the time estimate on $u$,
\[
 \left|\frac{c(n)}{h^2}\fint_{B_h} \d_\t^2 u(x+y,t)-\d^2_\t u(x,t)dy \right|\leq C\t^{1+\a/\s}h^{-2}.
\]
Combining these and setting $h=\t^{1/\s}$ gives that
\[
 |\d_\t^2 \D u|\leq C\t^{\frac{\s+\a-2}{\s}},
\]
and this concludes the proof of \eqref{eq:EKi4}.

Now we proceed exactly as in the case of $\s+\a<2$, except that we offer a different estimate in place of \eqref{eq:EKi2}, which is no longer valid. At any point $(x,t)$, we have the estimate
\[
 |\d_\t^2 u(x+y,t) +\d_\t^2 u(x-y,t) -2\d_\t^2 u(x,t) - y^T D^2 \d_\t^2 u(x,t)y|\leq C |y|^{\s+\a}
\]
from the space estimate \eqref{eq:EKi1}. We apply this to the integral of the incremental quotients along spherical shells, noting that the odd terms cancel:
\[
 \left|\int_{|y|=r}\d [\d_\t^2 u] (x,t;y)dy - c(n) r^{n+1} \D \d^2_\t u(x,t)\right| \leq C r^{n-1+\s+\a}.
\]
When combined with the estimate in time on $\D u$, this implies
\[
 \left|\int_{|y|=r}\d [\d_\t^2 u] (x,t;y)dy\right|\leq C(r^{n-1+\s+\a} + r^{n+1} \t^{\frac{\s+\a-2}{\s}}).
\]
Integrating this in $r$ against the kernel, we obtain that
\begin{align*}
|(-\D)^{\s/2}\d_\t^2 u(x,t) - (-\D)^{\s/2}_h \d_\t^2 u(x,t)|&\leq   (2-\s)\left|\int_{|y|\leq h}\frac{\d [\d_\t^2 u](x,t;y)dy}{|y|^{n+\s}}\right|\\
&\leq C\1 (2-\s)h^{\a} + h^{2-\s}\t^{\frac{\s+\a-2}{\s}}\2.
\end{align*}

Now proceeding as in the case of $\s+\a<2$, this leads to
\[
 |(-\D)^{\s/2}\d_\t^2u(x,t)|\leq C(2-\s)\1 \frac{\t^{1+\a/\s}}{h^\s}+ h^\a\2 + C h^{2-\s}\t^{\frac{\s+\a-2}{\s}}.
\]
Setting $h=\t^{1/\s}$ yields the conclusion.
\end{proof}

\section{Appendix}

In the first part of this appendix we establish a few interpolation results about H\"older spaces. The second half establishes an approximation procedure for viscosity solutions by classical solutions.

The following lemma can be understood as a maximum principle.

\begin{lemma}
For any $u \in C([-1,0])$,
\begin{align*}
&u(-1) = u(0) = 0 \qquad\text{ and }\qquad \sup_{\t\in(0,1)} \left\|\d^2_\t u\right\|_{L^\8[-1+\t,0]} \leq 1 \qquad \Rightarrow\qquad \|u\|_{L^\8[-1,0]} \leq 1.
\end{align*}
\end{lemma}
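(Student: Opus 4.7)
The plan is to prove this via a one-shot discrete maximum principle. First I would rewrite the hypothesis in symmetric form: expanding $\delta_\tau^2 u(t) = u(t) - 2u(t-\tau) + u(t-2\tau)$ and substituting $s = t-\tau$, the assumption becomes
\[
|u(s+\tau) - 2u(s) + u(s-\tau)| \leq 1 \qquad \text{whenever } s\pm\tau \in [-1,0],\ \tau \in (0,1).
\]
This form makes visible that $u$ has a uniformly controlled centered second difference at every scale.

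Next I would argue by contradiction. Suppose $M := \sup_{[-1,0]} u > 1$. Since $u$ is continuous on a compact interval and vanishes at both endpoints, the supremum is attained at some interior $t_0 \in (-1,0)$. The central geometric choice is
\[
\tau := \min(1+t_0,\, -t_0) \in (0, 1/2],
\]
which is designed so that both shifted points $t_0 \pm \tau$ lie in $[-1,0]$ and at least one of them coincides with an endpoint where $u$ vanishes. Plugging $s = t_0$ into the rewritten hypothesis and using $u(t_0) = M$ gives
\[
u(t_0+\tau) + u(t_0-\tau) \geq 2M - 1.
\]
One of the two terms on the left is zero (the endpoint value) while the other is bounded above by $M$, so $M \geq 2M - 1$, i.e.\ $M \leq 1$, contradicting $M > 1$. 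This yields $\sup u \leq 1$; applying the same reasoning to $-u$ (which satisfies identical hypotheses) gives $\inf u \geq -1$, and together these prove the claim.

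The only delicate point is geometric: picking $\tau$ to be the smaller of the two distances from $t_0$ to the endpoints is precisely what makes the second difference land in its allowed range while simultaneously pinning one evaluation point to the boundary. Once this setup is in place, the bound follows from a single invocation of the hypothesis, and I do not foresee any further obstacle.
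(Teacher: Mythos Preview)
Your proof is correct and follows the same idea as the paper: locate the interior maximum and apply the second-difference bound once, using a step $\tau$ that pins one evaluation point to an endpoint where $u$ vanishes. The only cosmetic difference is that the paper reduces by symmetry (WLOG) to a maximum $t\in[-1/2,0]$ and takes $\tau=|t|$, whereas your choice $\tau=\min(1+t_0,-t_0)$ handles both halves of the interval in one stroke.
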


\begin{proof}
Assume by contradiction and without loss of generality that there exists $t \in [-1/2,0]$ that realizes the strictly positive maximum of $(u-1)$ in $[-1,0]$. Then we obtain the following contradiction,
\[
-1 \leq \d_t^2 u(t) = (u(2t) - u(t)) - (u(t)-u(0)) < 0 - 1.
\]
\end{proof}

%\begin{lemma}
%Let $\a,\b\in(0,1)$. Then for any $u \in C([-1,0])$,
%\begin{align*}
%&u(-1)=u(0)=0, \qquad \sup_{\t\in(0,1)}\left[\frac{\d_{\t}u}{\t^{\b}}\right]_{C^{0,\a}_*([-1+\t,0])} \leq 1\qquad \Rightarrow \qquad \osc_{[-1,0]}u \leq 2.
% \end{align*}
%\end{lemma}
%
%\begin{proof}
%Let,
%\[
%\varphi(t) = 2\max\1|t|^{(\a+\b)/4},|t+1|^{(\a+\b)/4}\2.
%\]
%We want to show that $u \leq \varphi$ in $[-1,0]$ and therefore $u \leq 2$. Assume that there exists $\t\in(0,1/2]$ such that $-\t$ realizes the positive maximum of $(u-\varphi)$ in $(-1,0)$. Then we obtain the following contradiction,
%\[
%-2\1 2-2^{(\a+\b)/4}\2 \t^{(\a+\b)/4} \geq \d_\t^2\varphi(0) \geq \d_\t^2 u(0) \geq -\t^{\a+\b}.
%\]
%A similar contradiction happens if $\t\in(1/2,1)$ by considering the second order differences at $-1$.
%\end{proof}

The proof of Lemma 5.6 in \cite{Caffarelli95} shows that if $\a+\b<1$ then there exits some constant $C>0$ depending on $1-(\a+\b)$ such that the following estimate holds,
\[
\sup_{\t\in(0,1)}\left|\frac{\d_\t u(0)}{\t^{\a+\b}}\right| \leq C\1\osc_{[-1,0]}u + \sup_{\t\in(0,1)}\left[\frac{\d_{\t}u}{\t^{\b}}\right]_{C^{0,\a}_*([-1+\t,0])}\2.
\]

By applying this result followed by the maximum principle to,
\[
\bar u(s) := \frac{u(\bar\t s) + s u(-\bar\t) - (s+1)u(0)}{\bar\t^{\a+\b}\sup_{\t\in(0,\bar\t)}\left[\frac{\d_\t u}{\t^\b}\right]_{C^{0,\a}_*([\t-\bar\t,0])}}
\]
we get the following corollary.

\begin{corollary}\label{lem:appendix3}
Let $\a,\b\in(0,1)$ such that $\a+\b<1$. There exists a constant $C>0$ depending on $1-(\a+\b)$ such that for any $u \in C([-1,0])$ and $\bar\t\in(0,1)$,
\[
\sup_{\t\in(0,\bar\t)}\left|\frac{\d_\t u(0)}{\t^{\b+\a}}\right| \leq \left|\frac{\d_{\bar\t} u(0)}{\bar\t^{\b+\a}}\right| + C\sup_{\t\in(0,1)}\left[\frac{\d_\t u}{\t^\b}\right]_{C^{0,\a}_*([-1+\t,0])}.
\]
In particular,
\[
\sup_{\t\in(\bar\t,1)}\left|\frac{\d_{\t} u(0)}{\t^{\b}}\right| \geq \frac{1}{2}\sup_{\t\in(0,1)}\left|\frac{\d_\t u(0)}{\t^{\b}}\right| - C\bar\t^\a\sup_{\t\in(0,1)}\left[\frac{\d_\t u}{\t^\b}\right]_{C^{0,\a}_*([-1+\t,0])}.
\]
\end{corollary}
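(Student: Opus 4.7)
The plan is to follow the hint: apply the Caffarelli--Cabr\'e estimate displayed just above the corollary to the normalized function $\bar u$, after first using the preceding maximum-principle lemma to control $\osc\bar u$. Write $N := \sup_{\t\in(0,\bar\t)}\left[\d_\t u/\t^\b\right]_{C^{0,\a}_*([\t-\bar\t,0])}$, so that $\bar u(s) = [u(\bar\t s) + s\,u(-\bar\t) - (s+1)u(0)]/(\bar\t^{\a+\b}N)$. One reads off $\bar u(0) = \bar u(-1) = 0$ immediately. Since second differences annihilate affine functions of $s$, a direct computation yields $\d_\t^2 \bar u(s) = \d_{\bar\t\t}^2 u(\bar\t s)/(\bar\t^{\a+\b}N)$ for $\t\in(0,1)$ and $s$ in the domain of definition. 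Writing $\d_{\bar\t\t}^2 u(\xi) = \d_{\bar\t\t}u(\xi) - \d_{\bar\t\t}u(\xi-\bar\t\t)$ and invoking the definition of $N$ bounds this by $(\bar\t\t)^\b N \cdot (\bar\t\t)^\a$, so $|\d_\t^2\bar u(s)| \leq \t^{\a+\b} \leq 1$. The maximum-principle lemma then gives $\|\bar u\|_{L^\8[-1,0]} \leq 1$, hence $\osc_{[-1,0]}\bar u \leq 2$.

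Next I verify $\sup_{\t\in(0,1)}[\d_\t\bar u/\t^\b]_{C^{0,\a}_*([-1+\t,0])} \leq 1$. A direct computation gives $\d_\t\bar u(s) = [\d_{\bar\t\t}u(\bar\t s) - \t\,\d_{\bar\t}u(0)]/(\bar\t^{\a+\b}N)$; the second summand is constant in $s$ and drops out of the seminorm. For the first summand, the rescaling $s\mapsto\bar\t s$ contributes a factor $\bar\t^\a$, and combining with the identity $(\bar\t\t)^\b/\t^\b = \bar\t^\b$ exactly cancels the normalization $\bar\t^{\a+\b}$, leaving $N/N = 1$. Applying the Caffarelli--Cabr\'e estimate to $\bar u$ therefore yields $\sup_{\t\in(0,1)}|\d_\t\bar u(0)/\t^{\a+\b}| \leq 3C$.

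Translating back, I compute $\d_\t\bar u(0) = -\bar u(-\t)$ explicitly to obtain the identity
\[
\frac{\d_\t\bar u(0)}{\t^{\a+\b}} = \frac{1}{N}\left(\frac{\d_{\bar\t\t}u(0)}{(\bar\t\t)^{\a+\b}} - \t^{1-\a-\b}\,\frac{\d_{\bar\t}u(0)}{\bar\t^{\a+\b}}\right).
\]
Setting $\t':=\bar\t\t\in(0,\bar\t)$, rearranging, and using $\t^{1-\a-\b}\leq 1$ (recall $\a+\b<1$) together with $N \leq \sup_{\t\in(0,1)}[\d_\t u/\t^\b]_{C^{0,\a}_*([-1+\t,0])}$ (which holds since $[\t-\bar\t,0]\subset[-1+\t,0]$ as $\bar\t\leq 1$), the first inequality of the corollary follows upon taking the supremum over $\t'$. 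The ``in particular'' statement is then a short case split: for $\t\in(\bar\t,1)$ the bound is trivial, while for $\t\in(0,\bar\t)$ one multiplies the first inequality by $\t^\a\leq\bar\t^\a$ and uses continuity of $u$ at $\bar\t$ to bound $|\d_{\bar\t}u(0)/\bar\t^\b|$ by $\sup_{\t\in(\bar\t,1)}|\d_\t u(0)/\t^\b|$.

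The only real technicality is verifying that the three powers of $\bar\t$---those arising from the rescaling of the H\"older seminorm, from the identity $\t^\b\mapsto(\bar\t\t)^\b$ in the denominator, and from the normalization $\bar\t^{\a+\b}$ itself---balance exactly; this balance is precisely what the choice of normalization factor in the definition of $\bar u$ is designed to produce, so no ingredients beyond the two preceding lemmas are required.
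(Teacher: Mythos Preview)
Your proof is correct and follows exactly the approach sketched in the paper: apply the maximum-principle lemma and then the Caffarelli--Cabr\'e estimate to the rescaled function $\bar u$, and unwind the scaling. You have carefully verified the three powers of $\bar\t$ balance and that the domains match up (the inclusion $[\t-\bar\t,0]\subset[-1+\t,0]$), which is precisely the bookkeeping the paper leaves to the reader. Your derivation of the ``in particular'' clause in fact yields the slightly sharper bound $\sup_{(\bar\t,1)}\geq \sup_{(0,1)} - C\bar\t^\a\sup[\cdots]$, without the factor $\tfrac12$; this is fine. One trivial omission: you implicitly assume $N>0$ when dividing by it, but the degenerate case $N=0$ (i.e.\ $u$ affine on $[-\bar\t,0]$) is immediate since then both sides of the first inequality equal $|\d_{\bar\t}u(0)|/\bar\t^{\a+\b}$.
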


Finally, this last lemma establishes a H\"older estimate for the derivative when $\a+\b>1$.

\begin{lemma}\label{lem:appendix4}
Let $\a,\b\in(0,1)$ such that $\b+\a>1$ and $u:[-1,1]\to\R$ such that,
\[
\osc_{[-1,1]} u +\sup_{(t-\t,t]\ss(-1,1]}\frac{\d_\t^2 u(t)}{\t^{\b+\a}} \leq 1.
\]
Then for some constant $C$ depending on $\b+\a-1$,
\[
\|u_t\|_{C^{0,\a+\b-1}_*((-1,1))} \leq C.
\]
\end{lemma}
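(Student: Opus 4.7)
The strategy is the classical Zygmund-type dyadic argument: a second-difference estimate with exponent $\alpha+\beta>1$ is enough to control the first derivative in a H\"older sense. The starting point is the identity
\[
\delta_h u(t)\;=\;2\,\delta_{h/2}u(t)\;-\;\delta^2_{h/2}u(t),
\]
obtained by splitting $u(t)-u(t-h)$ through the midpoint $t-h/2$. Dividing by $h$ and iterating with $h$ replaced by $h/2^k$ gives
\[
\Big|\tfrac{\delta_{h/2^k}u(t)}{h/2^k}-\tfrac{\delta_{h/2^{k+1}}u(t)}{h/2^{k+1}}\Big|\;=\;\Big|\tfrac{\delta^2_{h/2^{k+1}}u(t)}{h/2^k}\Big|\;\le\;2^{-k(\alpha+\beta-1)-(\alpha+\beta)}\,h^{\alpha+\beta-1},
\]
using the hypothesis. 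Since $\alpha+\beta>1$ this is geometrically summable, so $\{(h/2^k)^{-1}\delta_{h/2^k}u(t)\}_{k\ge 0}$ is Cauchy with limit $u_t(t)$, and the geometric-series sum yields the approximation bound
\[
\Big|u_t(t)-\frac{\delta_h u(t)}{h}\Big|\;\le\;\frac{C\,h^{\alpha+\beta-1}}{\alpha+\beta-1}.
\]

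For the H\"older estimate, take $t,s\in(-1,1)$ close and (WLOG) $t>s$; set $h:=t-s$, so that $t-h=s$. Then the first differences satisfy the miraculous identity
\[
\delta_h u(t)-\delta_h u(s)\;=\;u(t)-2u(s)+u(s-h)\;=\;\delta^2_h u(t),
\]
so the hypothesis gives $|\delta_h u(t)-\delta_h u(s)|/h\le h^{\alpha+\beta-1}=|t-s|^{\alpha+\beta-1}$. The triangle inequality combined with the approximation bound above then yields
\[
|u_t(t)-u_t(s)|\;\le\;\Big|u_t(t)-\tfrac{\delta_h u(t)}{h}\Big|+\Big|\tfrac{\delta_h u(t)-\delta_h u(s)}{h}\Big|+\Big|\tfrac{\delta_h u(s)}{h}-u_t(s)\Big|\;\le\;C\,|t-s|^{\alpha+\beta-1}.
\]
The pointwise bound on $u_t$ on compact subintervals of $(-1,1)$ follows similarly by choosing $h$ of the order of the distance from $t$ to the endpoints of $[-1,1]$: $|u_t(t)|\le \operatorname{osc}_{[-1,1]}u/h+Ch^{\alpha+\beta-1}\le C$. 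Near the endpoint $-1$ one may use forward differences in place of backward ones, observing that $u(t+2\tau)-2u(t+\tau)+u(t)=\delta^2_\tau u(t+2\tau)$ satisfies the same kind of bound as long as $t+2\tau\le 1$.

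The only delicate point is tracking how the constants degenerate as $\alpha+\beta\to 1^+$: the factor $(1-2^{-(\alpha+\beta-1)})^{-1}$ coming from the geometric sum is comparable to $1/(\alpha+\beta-1)$, which is exactly the advertised dependence. Beyond this book-keeping the argument is entirely elementary, and no appeal to the PDE machinery or to Lemma \ref{lem:appendix3} is needed.
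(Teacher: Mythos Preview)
Your argument is correct and is the same dyadic Zygmund-type idea as the paper's proof, only packaged differently. The paper first cites Lemma~5.6 of \cite{Caffarelli95} to obtain that $u$ is Lipschitz (hence a.e.\ differentiable), and then at a point of differentiability runs the contrapositive of your telescoping step: assuming $u(t_0)=u_t(t_0)=0$ and $u(h)>Ch^{\alpha+\beta}$, the same halving identity forces $u(2^{-i}h)/(2^{-i}h)$ to stay bounded below by a positive constant, contradicting $u_t(t_0)=0$. Your direct version is a little more self-contained since you build differentiability from the Cauchy property of the dyadic difference quotients rather than importing Lipschitz continuity; the one place you are slightly quick is in identifying the dyadic limit with the honest derivative $u_t(t)$ for \emph{every} $h$, which is exactly the issue the paper sidesteps via the Lipschitz/a.e.\ route.
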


\begin{proof}
By Lemma 5.6 from \cite{Caffarelli95} we know that $u$ is Lipschitz and therefore differentiable almost everywhere. By a density argument it suffices to show that that for each point of differentiability $t_0\in(-1,1)$,
\begin{align*}
|u(t)-u(t_0)-u_t(t_0)(t-t_0)| \leq C|t|^{\a+\b} \text{ for } t \in [-1,1].
\end{align*}

Assume without loss of generality that $t_0 = 0$, $u(t_0) = 0$ and $u_t(t_0) = 0$. If there exists $h\in(0,1]$ such that $u(h) > Ch^{\a+\b}$, then by iterating the hypothesis of the Lemma we get for every $i\in\N$,
\begin{align*}
\frac{u(2^{-i}h)}{2^{-i}h} > \1C-\sum_{j=0}^{i-1}2^{-(\a+\b-1)j}\2 h^{\a+\b-1} \geq \frac{C}{2}h^{\a+\b-1}>0,
\end{align*}
provided that $C = 4/(2^{\a+\b-1}-1)$. This contradicts $u_t(0)=0$ as $i\to\8$. 
\end{proof}

The following Lemma can be proved as Lemma 5.6 in \cite{Caffarelli95},

\begin{lemma}\label{lem:appendix5}
For $u \in C((-1,0]\to L^1(B_1))$ and $\b\in(0,1)$,
\[
\sup_{(t-\t,t]\ss(-1,0]} \left\|\frac{\d_\t u(t)}{\t^\b}\right\|_{L^1(B_1)} \leq C\1\sup_{t\in(-1,0]}\|u(t)\|_{L^1(B_1)} + \sup_{(t-2\t,t]\ss(-1,0]} \left\|\frac{\d_\t^2 u(t)}{\t^\b}\right\|_{L^1(B_1)}\2
\]
\end{lemma}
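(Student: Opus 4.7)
The plan is to imitate the proof of Lemma 5.6 in \cite{Caffarelli95} essentially verbatim, viewing $t \mapsto u(\cdot,t)$ as a function valued in the Banach space $L^1(B_1)$. That argument uses only the triangle inequality and linear algebraic identities for first and second differences, and both carry over with $|\cdot|$ replaced by $\|\cdot\|_{L^1(B_1)}$.

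The algebraic backbone is the identity $\d_{2\t}u(t) = 2\d_\t u(t) - \d_\t^2 u(t)$, which solved for $\d_\t u(t)$ reads
\[
\d_\t u(t) = \frac{1}{2}\d_{2\t}u(t) + \frac{1}{2}\d_\t^2 u(t).
\]
Applying this identity to the $\d_{2\t}u$ term on the right, then to $\d_{4\t}u$, and so on $k$ times, yields the telescoping expansion
\[
\d_\t u(t) = \frac{1}{2^k}\d_{2^k\t}u(t) + \sum_{i=0}^{k-1}\frac{1}{2^{i+1}}\d_{2^i\t}^2 u(t).
\]

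The next step is to take $\|\cdot\|_{L^1(B_1)}$ on both sides, divide by $\t^\b$, and rewrite each term using $\t^{-\b} = 2^{j\b}(2^j\t)^{-\b}$ at the appropriate scale to obtain
\[
\frac{\|\d_\t u(t)\|_{L^1(B_1)}}{\t^\b} \leq 2^{k(\b-1)}\frac{\|\d_{2^k\t}u(t)\|_{L^1(B_1)}}{(2^k\t)^\b} + \frac{1}{2}\sum_{i=0}^{k-1}2^{i(\b-1)}\frac{\|\d_{2^i\t}^2 u(t)\|_{L^1(B_1)}}{(2^i\t)^\b}.
\]
For $\t > 1/2$ the claim is immediate from $\|\d_\t u(t)\|_{L^1(B_1)} \leq 2\sup_s\|u(s)\|_{L^1(B_1)}$. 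Otherwise, I would pick $k$ to be the largest integer with $2^k\t \leq 1/2$, so that $2^k\t \in (1/4, 1/2]$; then the first term is bounded by a universal multiple of $\sup_s\|u(s)\|_{L^1(B_1)}$ (using $2^{k(\b-1)}\leq 1$ since $\b<1$, together with the trivial $L^1$-bound on $\d_{2^k\t}u$), and each summand in the second term is at most $2^{i(\b-1)}$ times the RHS seminorm of the lemma. The geometric series $\sum_i 2^{i(\b-1)}$ converges with sum $(1-2^{\b-1})^{-1}$, so the resulting constant depends only on $1-\b$ and degenerates as $\b \nearrow 1$.

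The one place I expect to have to be careful is the boundary layer near $t = -1$: some of the intermediate times $t - 2^i\t$ might fall outside $(-1,0]$ and invalidate the iteration. This should be handled by first shifting $t$ forward by a bounded number of $\t$-steps via the identity $\d_\t u(t) = \d_\t u(t+\t) - \d_\t^2 u(t+\t)$ (valid when $t+\t \leq 0$), at the cost of finitely many additional controlled second-difference terms, until the upward iteration in $\t$ becomes admissible at the shifted time.
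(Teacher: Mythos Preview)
Your proposal is correct and follows exactly the approach the paper indicates: the paper gives no proof but simply refers to Lemma 5.6 of \cite{Caffarelli95}, and you have reproduced that dyadic telescoping argument in the $L^1(B_1)$-valued setting. One minor caveat: your boundary fix is not quite right as stated, since the number of forward $\tau$-shifts needed to move $t$ away from $-1$ is of order $1/\tau$ rather than uniformly bounded; this is easily repaired by instead iterating the forward identity $\delta_\tau u(t)=\tfrac12\delta_{2\tau}u(t+\tau)-\tfrac12\delta_\tau^2 u(t+\tau)$ dyadically when $t\le -1/2$.
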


\subsection{Approximation by Classical Solutions}\label{ss:approx}

Let $I$ be a uniformly elliptic non-local operator as in Definition \ref{def:operator}. As in (1) of the comments after the definition, we will write here $Iu(x)=I(x,\{L_{K,b}u(x)\}_{L_{K,b}\in \cL_0})$, and for each $x$ extend $I(x,\cdot)$ to an operator defined for any sequence $\{a_L\}$ in $l^\8 (\cL_0)$ satisfying the ellipticity property
\[
 \inf\{a_L-b_L\} \leq I(x,\{a_L\})-I(x,\{b_L\}) \leq \sup \{a_L-b_L\}.
\]
Let us record here that, if $T_h v(x) = v(x-h)$ is the translation operator, we have that $IT_h u(x) = I (x,\{L_{K,b}u(x-h)\})$, while $T_h Iu(x) = I(x-h,\{L_{K,b}u(x-h)\})$. 
Given $L_{K,b}\in \cL_0$ with kernel $K$ and drift $b$, i.e.
\[
 L_{K,b}u(x,t) = (2-\s) \int \frac{\d u (x;y) K(y)}{|y|^{n+\s}}dy + b \cdot  D u,
\]
define $J(y)$ in the following way: first, find the value $R_0\in (0,\8)$ such that
\[
 (2-\s)R_0^{-1} \int_{B_{R_0}} \frac{|y\cdot e|^2 dy}{|y|^{n+\s}} =\frac{4\L}{\l(\s-1)},
\]
where $e$ is a unit vector. This is always possible, as the left-hand side is proportional to $R_0^{1-\s}$. Then set
\[
 J(y) =  y \cdot b \chi_{B_{R_0}}\frac{\l(\s-1)}{4\L R_0}.
\]
This implies that
\[
 b = (2-\s) \int \frac{y J(y)dy}{|y|^{n+\s}},
\]
and also that $J(y)\leq \frac{\l}{4}$. Define $K'(y) = K(y)- J(y),$ which satisfies $K'\in [3\l/4,\L+\l/4]$. We may then rewrite the operator $L_{K,b}$ as
\[
 L_{K,b}= (2-\s)\int \frac{\d u(x;y) K'(y)dy}{|y|^{n+\s}}  - (2-\s) \int \frac{(u(x+y)-u(x-y)) J(y)dy}{|y|^{n+\s}}.
\]
The important point is that we have replaced the local drift with a non-local drift term. Set
\[
 \begin{cases}
  K'_\e (y) = \l\chi_{\{|y|\leq \e\}} + K'(y) \chi_{\{|y|>\e\}} \\
  J_\e (y) = J(y) \chi_{\{|y|>\e\}},
 \end{cases}
\]
and define $L^\e_{K,b}$ by
\[
 L^\e_{K,b}= (2-\s)\int \frac{\d u(x;y) K'_\e(y)dy}{|y|^{n+\s}}  - (2-\s) \int \frac{(u(x+y)-u(x-y)) J_\e(y)dy}{|y|^{n+\s}}.
\]
Note that this may alternatively be rewritten as
\[
 L^\e_{K,b}= (2-\s)\int \frac{\d u(x;y) (K'_\e(y)-J_\e)dy}{|y|^{n+\s}}  + (2-\s)\n u\cdot  \int \frac{y J_\e(y)dy}{|y|^{n+\s}},
\]
which is uniformly elliptic with constants $\l/2,\L+\l/2$.

Finally, choose a smooth positive function $\phi$ supported on $B_1$ with
\[
 \int \phi = 1.
\]
Set $\phi_r (x)=r^{-n} \phi(r x)$. At this point let us define,
\begin{equation}\label{eq:approx}
 I_\e u (x,t) = \int \phi_\e (z-x) I\1z,\{L^\e_{K,b} u(x,t)\}\2dz.
\end{equation}

The following proposition summarizes the useful properties of  $I_\e$.

\begin{proposition}\label{prop:apriori} Let $\W$ be smooth domain, $I$ be an $\8$-continuous uniformly elliptic non-local operator, and let $I_\e$ be as given in \eqref{eq:approx}. Then we have the following:
\begin{enumerate}
 \item $I_\e$ is a uniformly elliptic non-local operator.
 \item Let $v$ be a function in $\text{Class}(Q_r(x,t))$, with $B_r(x)\ss \W$. Assume furthermore that either $I$ is $1$-continuous or that $v$ is globally bounded. Then
 \[
  \sup_{Q_{r/2} (x,t)}|I_\e v - I v| \rightarrow 0 
 \]
as $\e \searrow 0$.
  \item Let $f_\e\in L^\8(\W\times (-1,0])$ and $g_\e \in C([-1,0]\rightarrow L^1_\s)$ be smooth functions. Then the Dirichlet problem
  \begin{equation}
   \begin{cases}
    v_t - I_\e v = f_\e & \text{ on } \W \times (-1,0]\\
    v=g_\e & \text{ on } \p_p (\W\times (-1,0]) 
   \end{cases}
   \end{equation}
  admits a unique viscosity solution $u_\e$ in $C(\W\times [-1,0])\cap C((-1,0]\rightarrow L^1_\s)$. Moreover, $u_\e$ is smooth ($C^{\s+\a}$) on $\W\times (-1,0]$.
  \item Let $f\in L^\8(\W\times (-1,0])$ and $g \in C((-1,0]\rightarrow L^1_\s)\cap C(\W\times \{-1\})$, and take $f_\e$, $g_\e$ as in the above with $f_\e \rightarrow f$ locally uniformly and $\|g_\e-g\|_{C((-1,0]\rightarrow L^1_\s)\cap C(\W\times\{-1\})} \rightarrow 0$. Assume furthermore either that $I$ is $1$-continuous or that $g,g_\e$ are uniformly bounded and $\|g-g_\e\|_{L^\8(\R^n\times (-1,0])}\rightarrow 0$. Then there exists a subsequence $\e_k$ such that $u_{\e_k}$ converges locally uniformly on $\W \cap [-1,0]$ to a function $u$. This $u$ is a viscosity solution to the Dirichlet problem
  \[
   \begin{cases}
    u_t - I u = f & \text{ on } \W \times (-1,0]\\
    u=g & \text{ on } \p_p (\W\times (-1,0]).
   \end{cases}
  \]
\end{enumerate}
\end{proposition}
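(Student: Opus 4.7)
The plan is to handle the four parts in order, exploiting that the construction of $I_\e$ decouples into two regularizations: the kernel-level modification $L_{K,b} \mapsto L^\e_{K,b}$ (absorbing the local drift into a nonlocal drift and flattening the kernel below scale $\e$) and the mollification of the $x$-dependence of $I$ against $\phi_\e$. For Part (1), the key observation is that each $L^\e_{K,b}$ lies in $\cL_0^\s(\l/2, \L+\l/2)$, so the associated extremal operators are dominated by $\cM^\pm$ for this enlarged family. Combined with the uniform ellipticity of $I$, which gives $|I(z,\{a_L\}) - I(z,\{b_L\})| \leq \sup_L |a_L - b_L|$, and the fact that convolution against the nonnegative unit-mass $\phi_\e$ preserves this one-sided estimate, this yields uniform ellipticity of $I_\e$. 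For Part (2), I would split the error as
\begin{align*}
I_\e v(x,t) - Iv(x,t) &= \int \phi_\e(z-x)\bigl[I(z,\{L^\e_{K,b} v(x,t)\}) - I(z,\{L_{K,b}v(x,t)\})\bigr]dz \\
&\quad + \int \phi_\e(z-x)\bigl[I(z,\{L_{K,b}v(x,t)\}) - I(x,\{L_{K,b}v(x,t)\})\bigr]dz.
\end{align*}
The first integrand is bounded by $\sup_{L_{K,b}\in\cL_0} |L^\e_{K,b} v(x,t) - L_{K,b}v(x,t)|$; since the differences are supported on $B_\e$ and dominated pointwise by the $\text{Class}$ regularity of $v$ (controlling the second-order part) and the linear growth of $J$ near the origin (controlling the drift part), this goes to $0$ uniformly in $L_{K,b}\in\cL_0$. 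The second integrand vanishes as $\e \to 0$ by $1$-continuity of $I$ applied to $v\in \text{Class}(Q_r(x,t))$, or by $\8$-continuity when $v$ is globally bounded, using that $\{L_{K,b}v(x,t)\}$ is bounded uniformly over $\cL_0$.

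For Part (3), existence of a viscosity solution would follow by Perron's method: the smoothness of $g_\e$ and $f_\e$ permits the construction of classical barriers, and $I_\e$ satisfies a comparison principle since its $x$-dependence is smooth after mollification and it fits into the framework of \cite{Barles2008567,MR2737806}. Uniqueness is a direct consequence. For regularity, the Krylov-Safonov estimate (Theorem \ref{thm:KS}) gives $u_\e \in C^{0,\bar\a}$ locally in space-time. The mollification makes $I_\e$ smooth in the $x$-variable (at fixed sequence argument), and combined with the regularized kernels $K'_\e$, the interior regularity theory for nonlocal equations (see \cite{MR2494809,MR2781586} and the parabolic analogues cited in Section \ref{sec:intro}) applies to yield $C^{1,\a}$ and higher regularity. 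Bootstrapping using incremental quotients in space and time, together with nonlocal parabolic Schauder estimates, reaches $C^{\s+\a}$ for each fixed $\e>0$.

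For Part (4), the strategy is the standard stability argument for viscosity solutions. Comparison for $I_\e$ with barriers built from $\|f_\e\|_\8$ and $\|g_\e\|_\8$ yields a uniform $L^\8$ bound on $u_\e$. Krylov-Safonov then gives uniform interior H\"older bounds on compact subsets of $\W\times(-1,0]$, and a barrier argument at the parabolic boundary gives uniform continuity up to $\p_p(\W \times (-1,0])$. Arzel\`a-Ascoli extracts a subsequence $u_{\e_k}\to u$ locally uniformly; the boundary condition passes to the limit by our hypothesis $g_\e\to g$. To verify that $u$ is a viscosity solution, take a smooth test function $\varphi$ touching $u$ strictly from above at $(x_0,t_0) \in \W\times(-1,0]$; by uniform convergence there exist local maxima of $u_{\e_k}-\varphi$ at points $(x_k,t_k)\to (x_0,t_0)$, and applying the viscosity inequality for $u_{\e_k}$ to the cut-off $\varphi \chi_{B_\rho(x_0)} + u_{\e_k} \chi_{B_\rho(x_0)^c}$ gives a pointwise inequality. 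Passing to the limit using Part (2) on the smooth test function, combined with $L^1_\s$-convergence of the tails (which follows from the uniform $L^\8$ bounds), yields the viscosity inequality for $u$.

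The main obstacle I anticipate is the regularity step in Part (3): reaching $C^{\s+\a}$ for each fixed $\e$ requires a nonlinear bootstrap that must handle the smoothness of the mollified operator $I_\e$ carefully, with the precise regularity theorem needed (for equations with smooth $x$-dependence and bounded kernels) adapted from the elliptic setting to the parabolic one. A second subtlety is in Part (4), where passing the tail of the nonlocal integral to the limit requires $L^1_\s$-stability of $u_{\e_k}\to u$ on the complement of $\W$, which must be deduced from the uniform $L^\8$ bounds together with the hypothesis on the convergence of $g_\e$.
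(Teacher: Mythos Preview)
Your handling of Parts (1) and (4) matches the paper's approach. There are two places where your argument diverges from the paper's, one minor and one substantial.

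\textbf{Part (2).} Your splitting into two integrals is the same as the paper's, and your treatment of the first integral is correct. However, for the second integrand $I(z,\{L_{K,b}v(x,t)\}) - I(x,\{L_{K,b}v(x,t)\})$ you invoke $1$-continuity (or $\8$-continuity) directly. But those hypotheses only say that $z\mapsto Iv(z,s) = I(z,\{L_{K,b}v(z,s)\})$ is continuous; they do not assert continuity of $z\mapsto I(z,\{a_L\})$ for a \emph{fixed} sequence $\{a_L\}$, and the extension of $I$ to all of $l^\8(\cL_0)$ is not claimed to be continuous in $x$. The paper handles this by inserting and subtracting $I(z,\{L_{K,b}v(z,s)\})$, which produces one term that is exactly $\int\phi_\e(z-y)[Iv(z,s)-Iv(y,s)]dz$ (controlled by the continuity hypothesis) and another term bounded via uniform ellipticity by $\sup_{L_{K,b},|z-y|<\e}|L_{K,b}v(y,s)-L_{K,b}v(z,s)|$, which tends to zero because $v\in\text{Class}$. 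This is an easy fix, but your argument as written is incomplete.

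\textbf{Part (3).} Here your approach is genuinely different from the paper's, and the bootstrap you propose has a real gap. You aim to obtain a viscosity solution by Perron's method and then upgrade its regularity to $C^{\s+\a}$ by appealing to the interior regularity theory of \cite{MR2494809,MR2781586} together with Schauder-type bootstrapping. The problem is that $C^{\s+\a}$ regularity for \emph{non-concave} fully nonlinear nonlocal equations is not available (this is the analogue of $C^{2,\a}$ failing without convexity in the second-order setting), and the $C^{1,\a}$ theory you cite is for translation-invariant operators, which $I_\e$ is not. Smooth $x$-dependence alone does not rescue this.

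The paper sidesteps the issue entirely by exploiting the specific structure of the kernel regularization: because $K'_\e\equiv\l$ on $B_\e$, one can write $I_\e u = \cL u + F(u)$ with $\cL = \l(2-\s)\int |y|^{-n-\s}\d u(x;y)\,dy$ the fractional Laplacian and $F$ a nonlinear remainder that is Lipschitz in the weak norm $L^1_\s$ (with constant depending on $\e$) and H\"older in $x$ (from the mollification). Existence is then obtained by the Leray--Schauder fixed point theorem applied to the map $u\mapsto G(u)$, where $G(u)$ solves the \emph{linear} fractional heat equation $G(u)_t - \cL G(u) = f_\e + F(u)$ with data $g_\e$. The $C^{\s+\a}$ regularity of the fixed point then comes directly from linear Schauder theory for the fractional heat equation, since the right-hand side $f_\e+F(u)$ is H\"older. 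No nonlinear bootstrap is needed. You correctly anticipated that the regularity in (3) is the delicate step, but the resolution requires this decomposition rather than a direct iteration.
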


\begin{proof}[Sketch of proof.] (1) follows from the definition of uniform ellipticity of $I$ and the fact that an average of uniformly elliptic operators is still uniformly elliptic. Indeed, the same argument shows that for given smooth functions $u,v$,
 \[
  I_\e u-I_\e v\leq \sup_{L_{K,b}\in \cL_0} L^\e_{K,b}(u-v),
 \]
a fact which will be useful momentarily.

For (2),  write
\begin{align*}
 I_\e v(y,s) -Iv(y,s) &= \int \phi_\e (z-y) [I\1z,\{L^\e_{K,b} v(y,s)\}\2-I\1z,\{L_{K,b} v(y,s)\}\2]dz \\
 &\qquad +\int \phi_\e (z-y) [I\1z,\{L_{K,b} v(y,s)\}\2 -I\1y,\{L_{K,b} v(y,s)\}\2 ]dz
\end{align*}
In the first term, from the uniform ellipticity of $I$, we have
\[
 |I\1z,\{L^\e_{K,b} v(y,s)\}\2-I\1z,\{L_{K,b} v(y,s)\}\2| \leq \sup \{|L^{\e}_{K,b}v(y,s)-L_{K,b}v(y,s)|\},
\]
which goes to $0$ uniformly in $(y,s)$ for any $v\in \text{Class}(Q_r(x,t))$ by the explicit construction of $L^\e$. For the second term, we may further subdivide it as
\begin{align*}
 \int &\phi_\e (z-y) [I\1z,\{L_{K,b} v(y,s)\}\2 -I\1z,\{L_{K,b} v(z,s)\}\2 ]dz\\
 &+\int \phi_\e (z-y) [I\1z,\{L_{K,b} v(z,s)\}\2 -I\1y,\{L_{K,b} v(y,s)\}\2 ]dz
\end{align*}
The second term in this we recognize as the familiar quantity
\[
 \int \phi_\e (z-y) [Iv(z,s) -Iv(y,s)]dz,
\]
which goes to $0$ because of the continuity assumption on $I$ and the assumption on $v$ (which together guarantee that $Iv$ is a continuous function). For the final remaining quantity, we again use the uniform ellipticity of $I$, this time to say that
\[
 \sup_{|z-y|<\e}|\1z,\{L_{K,b} v(y,s)\}\2 -I\1z,\{L_{K,b} v(z,s)\}\2 |\leq \sup_{L_{K,b},|z-y|<\e} \{ |L_{K,b} v(y,s) - L_{K,b} v(z,s)|\},
\]
which tends to $0$ uniformly for $v\in \text{Class}(Q_r(x,t))$.

We focus on the proof of (3). To this end, it is convenient to write
\[
 I_\e u = \l(2-\s)\int \frac{\d u (x;y) dy}{|y|^{n+\s}} +F(u):=\cL(u)+F(u),
\]
where $F$ is a non-linear non-local operator. We give two estimates on $F$ (see \cite{MR3169771} for complete details), the first of which follows from the uniform ellipticity of $I_\e$:
\[
 F(u)-F(v)=I_\e u -I_\e v -\cL(u-v) \leq \sup_{L_{K,b}\in \cL_0} L^\e_{K,b}(u-v) -\cL(u-v).
\]
Using the form of each $L^\e_{K,b}$, this is bounded by
\[
 C \int_{B_\e^c}\frac{|\d (u-v)(x;y)|dy}{|y|^{n+\s}} \leq C_\e \|u-v\|_{L^1_\s}.
\]
For the second estimate, let $\t_h u(x) =u(x-h)$. Then from the definition of $I_\e$ by convolution, we have
\[
 |\t_{-h}F \t_h u -F u|\leq C_\e |h| \|u\|_{L^1_\s}.
\]

Let $\cB=C^{0,\a}(\R^n\times [-1,0])$ for $\a<\s/2$ fixed, and define the operator $G:\cB\rightarrow \cB$ which maps a function $u$ to the unique viscosity solution to the linear fractional parabolic equation
\[
 \begin{cases}
  G(u)_t - \cL G(u)=f_\e + F(u) & \text{ on } \W\times (-1.0]\\
  G(u)=g_\e & \text{ on } \p_p(\W\times (-1.0]).
 \end{cases}
\]
We claim that $G$ has a fixed point. From the estimates on $F$, it follows that $F:\cB\rightarrow \cB$ is continuous. As a consequence of regularity for the fractional parabolic problem, we have that
\[
 \|G(u)-G(v)\|_{C^{0,\b}(\R^n\times [-1,0])}\leq C\|u-v\|_{\cB},
\]
for some $\b>\a$, so $G$ is continuous and compact. Also, we have that $G(u)$ satisfies an interior $C^{\s+\a}$ estimate, so it is the unique classical solution. If for some $\k\in [0,1]$  we have $u=\k G(u)$, then $u$ satisfies
\begin{equation}\label{eq:prop1}
\begin{cases}
  u_\t - (1-\k)\cL(u) - \k I_\e u  = \k f_\e  &  \text{ on } \W\times (-1.0].\\
  u = \k g_\e & \text{ on } \p_p(\W\times (-1.0]).
\end{cases}  
\end{equation}
The operator $(1-\k)\cL + \k I_\e$ is uniformly elliptic, so from \cite{chang2014h} we have
\[
 \|u\|_{\cB}\leq C,
\]
with the constant independent of $\k$. This implies the existence of a fixed point $u$ from the Leray-Schauder fixed point theorem \cite{gilbarg2001elliptic}. This $u$ is the unique classical solution to \eqref{eq:prop1} (with $\k=1$).

Finally, to prove (4), observe that from the barriers in \cite{chang2014h} we obtain that $\{u_\e\}$ is equicontinuous (depending on the original initial data) in domains of the form $\W'\times[-1,0]$ for $\W'\cc\W$. We may therefore extract a subsequence $u_\e \rightarrow u$ locally uniformly, and $u=g$ on $\W\times \{-1\}$. Applying (2) and a standard argument about viscosity solutions, we obtain that $Iu=f$ on $\W\times [-1,0]$; the details may be found in \cite{MR3115838}.
\end{proof}

\textbf{Acknowledgments:} DK was partially supported by National Science Foundation grant DMS-1065926.

\bibliographystyle{plain}
\bibliography{mybibliography}

\end{document}